\numberwithin{equation}{section}
\newtheorem{theorem}{Theorem}[section]
\newtheorem{lemma}{Lemma}[section]
\newtheorem{remark}{Remark}[section]
\numberwithin{equation}{section} 
\title{
Rigidity theorems of complete K\"ahler-Einstein manifolds and complex space forms
\footnotetext[0]{2010 Mathematics Subject Classification. Primary: 53C20, 53C21,  53C55. }
\footnotetext[0]{ This research was supported by NSFC grant No. 11271071 and LMNS, Fudan.}
\footnotetext[0]{{\em Key words and phrases. rigidity theorems, K\"ahler-Einstein, complex space forms.}}
}
\author{Tian Chong, Yuxin Dong, Hezi Lin and Yibin Ren}
\date{}
\begin{document}


\maketitle
\begin{abstract}
We derive some elliptic differential inequalities from the Weitzenb\"ock
formulas for the traceless Ricci tensor of a K\"ahler manifold with constant
scalar curvature and the Bochner tensor of a K\"ahler-Einstein manifold
respectively. Using elliptic estimates and maximum principle, some $L^p$ and
$L^\infty $ pinching results are established to characterize
K\"ahler-Einstein manifolds among K\"ahler manifolds with constant scalar
curvature, and others are given to characterize complex space forms among
K\"ahler-Einstein manifolds. Finally, these pinching results may be combined
to characterize complex space forms among K\"ahler manifolds with constant
scalar curvature.
\end{abstract}

\section{Introduction}

One of the major problems in geometry is to investigate the rigidity
phenomena of some canonical geometric structures on manifolds. Various
geometric invariants (tensors or quantities) have been introduced to measure
the deviation of a general structure from some canonical one. For a
Riemannian manifold, the traceless Ricci tensor measures its deviation from
an Einstein manifold, while the Weyl curvature tensor measures its deviation
from a conformal flat manifold. These tensors have been used to establish
some rigidity theorems for some special Riemannian manifolds (cf. [HV],
[IS], [Ki], [PRS], [Sh1,2], etc.).

Over the past decades, much effort has been made to establish the existence
of K\"ahler metrics with constant scalar curvature on a compact K\"ahler
manifold (cf. [Ti], [Do], [LS], [Ch] and the references therein). Among these
metrics, K\"ahler-Einstein metrics form a notable subclass, which plays an
important role in both complex geometry and physics. Besides the existence,
the uniqueness and rigidity of these canonical K\"ahler metrics are also
important for geometric applications. Back to early 50's, Calabi had proved
the uniqueness for K\"ahler-Einstein metrics with nonpositive scalar
curvature. In 1986, Bando and Mabuchi [BM] showed the uniqueness for
K\"ahler-Einstein metrics with positive scalar curvature. Actually their
uniqueness results were established within a given K\"ahler class. In [IN],
Itoh and Nakagawa obtained some local rigidity results of a
K\"ahler-Einstein metric in the moduli space of Einstein metrics by means of
the variational stability. On the other hand, complete noncompact
K\"ahler-Einstein manifolds have also received much attention (cf. [TY1],
[TY2], [Ku]).

In this paper, we will consider complete K\"ahler manifolds with constant
scalar curvature and investigate the following two problems:

\textbf{(A)} The rigidity of K\"ahler-Einstein metrics among K\"ahler
metrics with constant scalar curvature;

\textbf{(B)} The rigidity of K\"ahler metrics with constant holomorphic
sectional curvature among K\"ahler-Einstein metrics. $
\begin{array}{ccccc}
&  &  &  &
\end{array}
\begin{array}{ccccc}
&  &  &  &
\end{array}
\begin{array}{ccccc}
&  &  &  &
\end{array}
\begin{array}{cccc}
&  &  &
\end{array}
\begin{array}{ccccc}
&  &  &  &
\end{array}
\begin{array}{cc}
&
\end{array}
$ As in the real case, we may use the traceless Ricci tensor $E$ to measure
the deviation of a K\"ahler metric from a K\"ahler-Einstein metric. In 1949,
Bochner introduced the so-called Bochner curvature tensor $B$ on a K\"ahler
manifold, which is an analogue of the Weyl curvature tensor. It seems a
little tautological to say that the Bochner curvature tensor measures the
deviation of a K\"ahler metric from a Bochner flat K\"ahler metric.
Nevertheless, for a K\"ahler-Einstein metric, the Bochner curvature tensor
measures directly the difference of its curvature tensor from that of the
K\"ahler metric with constant holomorphic sectional curvature (see (4.1) in
\S 4). In order to study Problems A and B, we will derive the Weitzenb\"ock
formulas for $|E|^2$ and $|B|^2$ respectively. First, note that if the
scalar curvature is constant, then $E$ is a $(1,1)$-type Codazzi tensor.
Next, if the metric is K\"ahler-Einstein, then $B$ satisfies the second
Bianchi identidy, which exhibits a Codazzi type property too. These
properties for $E$ and $B$, combined with some refined Kato inequalities,
enable us to deduce some differential inequalities for $|E|$ and $|B|$
respectively. We will treat these differential inequalities on both complete
noncompact and compact K\"ahler manifolds by means of elliptic estimates and
maximum principle. Some $L^p$ and $L^\infty $ pinching results will be
established to characterize K\"ahler-Einstein manifolds among complete
K\"ahler manifolds with constant scalar curvature, and others will be given
to characterize complex space forms among K\"ahler-Einstein manifolds.
Consequently we may also characterize complex space forms among complete
K\"ahler manifolds with constant scalar curvature. Finally, we would like to
mention that the authors [DLR] have established similar results for complete
Sasakian manifolds.
\section{Preliminaries}
Let $(M,g)$ be a smooth Riemannian $n$-manifold with dimension $n\geq 2$ and
let $S_g$ denote the scalar curvature of $g$. The Yamabe constant is defined
by
\begin{equation*}
\Lambda (M,g)=\underset{0\neq u\in C_0^\infty (M)}{\inf }\frac{%
\int_M(|\nabla u|^2+\frac{n-2}{4(n-1)}S_gu^2)dV_g}{(\int_M|u|^{\frac{2n}{n-2}%
}dV_g)^{\frac{n-2}n}}.
\end{equation*}
If $\Lambda (M,g)>0$, then one has the following Sobolev type inequality
\begin{equation}
\Lambda (M,g)(\int_M|u|^{\frac{2n}{n-2}}dV_g)^{\frac{n-2}n}\leq
\int_M(|\nabla u|^2+\frac{n-2}{4(n-1)}S_gu^2)dV_g  \tag{2.1}
\end{equation}
for any $u\in C_0^\infty (M)$. It is known that if $(M,g)$ is compact, then
the sign of $\Lambda (M,g)$ is basically determined by the sign of the
scalar curvature in a conformal class (cf. [He1]). However, there are
complete noncompact Riemannian manifolds with both negative scalar curvature
and positive Yamabe constant (cf. [SY], [He2]).

From now on, we assume that $(M^m,g,J)$ is a K\"ahler manifold with complex
dimension $m\geq 2$ and $\Lambda (M,g)>0$. Let $(z_\alpha )$ be a system of
local complex coordinates on $M$ and let $g_{\alpha \overline{\beta }}$ ($%
1\leq \alpha $, $\beta \leq m$) be the components of the K\"ahler metric in
the coordinates. The inverse matrix of $(g_{\alpha \overline{\beta }})$ is
denoted by $(g^{\alpha \overline{\beta }})$. Let $R_{\alpha \overline{\beta }%
\gamma \overline{\delta }}$ and $R_{\alpha \overline{\beta }}$ denote the
components of the curvature tensor and the Ricci tensor respectively. As
usual, we will use the summation convention on repeating indices. The
complex scalar curvature is defined by
\begin{equation*}
R=g^{\alpha \overline{\beta }}R_{\alpha \overline{\beta }}.
\end{equation*}
Note that $S_g=2R$. In this circumstance the Sobolev inequality (2.1)
becomes
\begin{equation}
\Lambda (M,g)(\int_M|u|^{\frac{2m}{m-1}}dV_g)^{\frac{m-1}m}\leq
\int_M(|\nabla u|^2+\frac{m-1}{2m-1}Ru^2)dV_g  \tag{2.2}
\end{equation}
for any $u\in C_0^\infty (M)$.

In [Bo], S. Bochner introduced the Bocher curvature tensor as follows:
\begin{align}
\nonumber B_{\alpha \overline{\beta }\gamma \overline{\delta }}= &R_{\alpha \overline{
\beta }\gamma \overline{\delta }}+\frac 1{m+2}(R_{\alpha \overline{\beta }
}g_{\gamma \overline{\delta }}+R_{\gamma \overline{\beta }}g_{\alpha
\overline{\delta }}+g_{\alpha \overline{\beta }}R_{\gamma \overline{\delta }
}+g_{\gamma \overline{\beta }}R_{\alpha \overline{\delta }})\\
&-\frac R{(m+1)(m+2)}(g_{\alpha \overline{\beta }}g_{\gamma \overline{\delta }
}+g_{\gamma \overline{\beta }}g_{\alpha \overline{\delta }}) \tag{2.3}
\end{align}
which may be regarded as a complex analogue of the Weyl curvature tensor.
Clearly the Bochner tensor $B$ has the same algebraic symmetries as the
curvature tensor of a K\"ahler metric. These includes
\begin{equation}
\left. B_{\alpha \overline{\beta }\gamma \overline{\delta }}=B_{\gamma
\overline{\beta }\alpha \overline{\delta }}=B_{\alpha \overline{\delta }%
\gamma \overline{\beta }},\quad \overline{B_{\alpha \overline{\beta }\gamma
\overline{\delta }}}=B_{\beta \overline{\alpha }\delta \overline{\gamma }%
}.\right.  \tag{2.4}
\end{equation}
In addition, it has the following metric contraction property:
\begin{equation*}
g^{\alpha \overline{\beta }}B_{\alpha \overline{\beta }\gamma \overline{%
\delta }}=0.  \tag{2.5}
\end{equation*}
The traceless Ricci tensor
$$E=E_{\alpha \overline{\beta }}dz^\alpha \otimes
dz^{\overline{\beta }}+E_{\overline{\alpha }\beta }dz^{\overline{\alpha }%
}\otimes dz^\beta $$
is defined by
\begin{equation*}
E_{\alpha \overline{\beta }}=R_{\alpha \overline{\beta }}-\frac Rmg_{\alpha
\overline{\beta }}.
\end{equation*}
Then the Bochner curvature tensor may also be expressed as
\begin{align}
\nonumber B_{\alpha \overline{\beta }\gamma \overline{\delta }}=&R_{\alpha \overline{
\beta }\gamma \overline{\delta }}+\frac 1{m+2}(E_{\alpha \overline{\beta }
}g_{\gamma \overline{\delta }}+E_{\gamma \overline{\beta }}g_{\alpha
\overline{\delta }}+g_{\alpha \overline{\beta }}E_{\gamma \overline{\delta }
}+g_{\gamma \overline{\beta }}E_{\alpha \overline{\delta }}) \\
&+\frac R{m(m+1)}(g_{\alpha \overline{\beta }}g_{\gamma \overline{\delta }
}+g_{\gamma \overline{\beta }}g_{\alpha \overline{\delta }}).
\tag{2.6}
\end{align}

For a K\"ahler manifold, the second Bianchi identity is reduced to
\begin{equation}
R_{\alpha \overline{\beta }\gamma \overline{\delta },\lambda }=R_{\alpha
\overline{\beta }\lambda \overline{\delta },\gamma }\quad \text{and}\quad
R_{\alpha \overline{\beta }\gamma \overline{\delta },\overline{\lambda }%
}=R_{\alpha \overline{\beta }\gamma \overline{\lambda },\overline{\delta }}.
\tag{2.7}
\end{equation}
By contracting the indices $\alpha $ and $\overline{\beta }$ in (2.7), we
get
\begin{equation}
R_{\gamma \overline{\delta },\lambda }=R_{\lambda \overline{\delta },\gamma
}\quad \text{and\quad }R_{\gamma \overline{\delta },\overline{\lambda }%
}=R_{\gamma \overline{\lambda },\overline{\delta }}.  \tag{2.8}
\end{equation}
A $(1,1)$-$type$ tensor is called Hermitian symmetric if the matrix of its
components is Hermitian symmetric. An Hermitian symmetric $(1,1)$-type
tensor field with the properties (2.8) will be called a $(1,1)$-type
Codazzi tensor. Clearly the Ricci tensor field is a $(1,1)$-type Codazzi
tensor. Thus, if the scalar curvature $R$ is constant, then the traceless
Ricci tensor field $E$ is also a $(1,1)$-type Codazzi tensor.

The usual Ricci identity for commuting covariant derivatives gives
\begin{equation}
E_{\alpha \overline{\beta },\lambda \overline{\mu }}-E_{\alpha \overline{%
\beta },\overline{\mu }\lambda }=E_{\gamma \overline{\beta }}R_{\overline{%
\gamma }\alpha \lambda \overline{\mu }}+E_{\alpha \overline{\gamma }%
}R_{\gamma \overline{\beta }\lambda \overline{\mu }}  \tag{2.9}
\end{equation}
and
\begin{equation}
B_{\alpha \overline{\beta }\gamma \overline{\delta },\lambda \overline{\mu }%
}-B_{\alpha \overline{\beta }\gamma \overline{\delta },\overline{\mu }%
\lambda }=B_{v\overline{\beta }\gamma \overline{\delta }}R_{\overline{v}%
\alpha \lambda \overline{\mu }}+B_{\alpha \overline{v}\gamma \overline{%
\delta }}R_{v\overline{\beta }\lambda \overline{\mu }}+B_{\alpha \overline{%
\beta }v\overline{\delta }}R_{\overline{v}\gamma \lambda \overline{\mu }%
}+B_{\alpha \overline{\beta }\gamma \overline{v}}R_{v\overline{\delta }%
\lambda \overline{\mu }}.  \tag{2.10}
\end{equation}

We will need the following two lemmas. The first one is an algebraic
inequality.

\begin{lemma} ([Ok]) Let $\lambda _\alpha $, $\alpha =1,...,m$, be
real numbers. If $\sum_{\alpha =1}^m\lambda _\alpha =0$, then
\begin{equation*}
|\sum_{\alpha =1}^m\lambda _\alpha ^3|\leq \frac{m-2}{\sqrt{m(m-1)}}%
(\sum_{\alpha =1}^m\lambda _\alpha ^2)^{3/2}.
\end{equation*}
\end{lemma}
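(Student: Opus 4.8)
The plan is to recognize this as a constrained extremization problem and solve it with Lagrange multipliers. Both sides of the asserted inequality are homogeneous of degree $3$ in $(\lambda_1,\dots,\lambda_m)$, and replacing each $\lambda_\alpha$ by $-\lambda_\alpha$ reverses the sign of $\sum_\alpha\lambda_\alpha^3$ while preserving the hypothesis $\sum_\alpha\lambda_\alpha=0$; hence it suffices to show that the maximum of $F(\lambda)=\sum_{\alpha=1}^m\lambda_\alpha^3$ over the set $\Sigma=\{\lambda\in\mathbb{R}^m:\ \sum_\alpha\lambda_\alpha=0,\ \sum_\alpha\lambda_\alpha^2=1\}$ equals $\frac{m-2}{\sqrt{m(m-1)}}$. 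The set $\Sigma$ is a smooth compact manifold (a sphere inside a hyperplane, since the gradients $(1,\dots,1)$ and $(\lambda_1,\dots,\lambda_m)$ are never parallel on $\Sigma$), so the maximum is attained and at a maximizer there are constants $\mu,\nu$ with $3\lambda_\alpha^2=2\nu\lambda_\alpha+\mu$ for every $\alpha$.

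The decisive observation is that this last relation forces every $\lambda_\alpha$ to be a root of the single quadratic $3x^2-2\nu x-\mu=0$, so the $\lambda_\alpha$ assume at most two distinct values, say $a$ with multiplicity $p$ and $b$ with multiplicity $q$, with $p+q=m$. (A double root of the quadratic is the case $a=b$, i.e. $p$ or $q=0$.) If $p=0$ or $q=0$ then all $\lambda_\alpha$ coincide, and then $\sum_\alpha\lambda_\alpha=0$ forces all of them to vanish, contradicting $\sum_\alpha\lambda_\alpha^2=1$; hence $p,q\geq 1$. The two constraints now read $pa+qb=0$ and $pa^2+qb^2=1$. From the first, $b=-\frac{p}{q}a$; substituting into the second and using $p+q=m$ gives $a^2=\frac{q}{pm}$, and then a short computation yields $F(\lambda)=pa^3+qb^3=pa^3\cdot\frac{(q-p)m}{q^2}=\pm\frac{q-p}{\sqrt{pqm}}$.

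It remains to maximize $\Phi(p,q)=\frac{q-p}{\sqrt{pqm}}$ over positive integers with $p+q=m$; by the sign symmetry we may take $q\geq p$. Writing $d=q-p=m-2p$, so that $0\leq d\leq m-2$, $p=\frac{m-d}{2}$, $q=\frac{m+d}{2}$ and $pq=\frac{m^2-d^2}{4}$, one finds $\Phi=\frac{2d}{\sqrt{m(m^2-d^2)}}$, which is strictly increasing in $d$ on $[0,m)$. Hence the maximum occurs at the largest admissible value $d=m-2$, realized by $p=1$, $q=m-1$, and there $m^2-d^2=4(m-1)$, so $\Phi=\frac{2(m-2)}{2\sqrt{m(m-1)}}=\frac{m-2}{\sqrt{m(m-1)}}$. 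This proves the inequality, with equality exactly when $m-1$ of the $\lambda_\alpha$ are equal (the remaining one being determined by $\sum_\alpha\lambda_\alpha=0$).

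I expect the only real subtlety to be organizational rather than computational: one must argue carefully that the Lagrange analysis genuinely reduces the problem to the two-value configuration — including the degenerate single-value case, which is disposed of by the trace-free condition — and then carry out cleanly the discrete optimization of $\Phi$ over the admissible pairs $(p,q)$. The individual computations (solving the two constraints, evaluating $F$, and optimizing the one-variable function of $d$) are elementary.
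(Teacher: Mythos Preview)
Your proof is correct. The paper does not actually prove this lemma: it is stated with the attribution ``([Ok])'' and taken as a known algebraic fact from Okumura's paper, so there is no argument in the text to compare against. Your Lagrange-multiplier reduction to two-value configurations, followed by the discrete optimization over $(p,q)$, is the standard way this inequality is established and is carried out accurately here, including the equality case $p=1$, $q=m-1$.
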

The next one is a gap result for solutions of an elliptic differential
inequality.
\begin{lemma} ([PRS]) Let $(M,g)$ be a complete Riemannian $n$%
-manifold on which the following Euclidean-type Sobolev inequality
\begin{equation*}
C(n)\left( \int_M|u|^{\frac{2n}{n-2}}dV_g\right) ^{\frac{n-2}n}\leq
\int_M|\nabla u|^2dV_g  \tag{2.11}
\end{equation*}
holds for every $u\in C_0^\infty (M)$ with a positive constant $C(n)>0$.
Suppose that $\psi \in Lip_{loc}(M)$ is a nonnegative solution of
\begin{equation*}
\psi \triangle \psi +q(x)\psi ^2\geq A|\nabla \psi |^2\quad \text{(weakly)
on }M
\end{equation*}
satisfying
\begin{equation*}
\int_{B_r}|\psi |^{\frac n2}dV_g=o(r^2)\quad \text{as }r\rightarrow +\infty
\end{equation*}
with $A\in R$, $A+\frac n2-1>0$ and $q(x)\in C^0(M)$. If $\psi $ is not
identically zero, then
\begin{equation*}
||q_{+}(x)||_{L^{\frac n2}(M)}\geq \frac{16C(n)(A+n/2-1)}{n^2}.
\end{equation*}
\end{lemma}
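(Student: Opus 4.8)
I will treat the final statement, Lemma 2.2. Here is how I would attack it.

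The plan is to run a single‑step Moser‑type test‑function argument on the differential inequality, with the exponent forced by the weighted $L^{n/2}$–condition on $\psi$, and then to feed the resulting gradient estimate into the Sobolev inequality (2.11) and Hölder's inequality so as to isolate $\|q_+\|_{L^{n/2}}$.

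First I would fix $\alpha=n/4$, pick a cut‑off $\varphi\in C_0^\infty(M)$ and set $u=\psi^{\alpha}\varphi$. Testing $\psi\triangle\psi+q\psi^2\ge A|\nabla\psi|^2$ (in the weak sense) against the nonnegative function $\eta=\psi^{2\alpha-2}\varphi^2$ and integrating by parts yields, writing $I=\int_M\psi^{2\alpha-2}|\nabla\psi|^2\varphi^2$,
\begin{equation*}
\int_M q\,\psi^{2\alpha}\varphi^2\ \ge\ (A+2\alpha-1)\,I\ +\ 2\int_M\psi^{2\alpha-1}\varphi\,\langle\nabla\psi,\nabla\varphi\rangle .
\end{equation*}
Since $\psi$ may vanish and $2\alpha-2<0$ when $n=3$, the function $\eta$ is rendered admissible by the standard device of replacing $\psi$ by $\psi+\epsilon$ and letting $\epsilon\to0$. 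Expanding $\int_M|\nabla u|^2=\alpha^2 I+2\alpha\int_M\psi^{2\alpha-1}\varphi\langle\nabla\psi,\nabla\varphi\rangle+K$ with $K=\int_M\psi^{2\alpha}|\nabla\varphi|^2$, I would solve for the cross term and substitute it into the previous inequality to eliminate it, obtaining
\begin{equation*}
\int_M|\nabla u|^2\ \le\ \alpha\int_M q\,\psi^{2\alpha}\varphi^2\ +\ \mu\, I\ +\ K,\qquad \mu:=-\alpha(A+\alpha-1).
\end{equation*}

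The crucial step is the treatment of $\mu I$: its coefficient $\mu$ need not be $\le0$, so $I$ cannot simply be discarded. Instead I would use $\int_M|\nabla u|^2\le(1+\epsilon)\alpha^2 I+(1+\epsilon^{-1})K$ (and the reverse Young inequality when $\mu>0$) to bound $\mu I$ by a multiple of $\int_M|\nabla u|^2$ plus a multiple of $K$; letting $\epsilon\to0$ the coefficient of $\int_M|\nabla u|^2$ so produced is $\mu/\alpha^2$, and the hypothesis $A+\tfrac n2-1>0$ is exactly what makes $\alpha^2-\mu=\alpha(2\alpha+A-1)=\tfrac n4(A+\tfrac n2-1)>0$, i.e. what allows this term to be absorbed into the left‑hand side. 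After absorption one has, with a finite constant $C_\epsilon$,
\begin{equation*}
\big(1-\tfrac{\mu}{\alpha^2}\big)\int_M|\nabla u|^2\ \le\ \alpha\int_M q_+\,\psi^{2\alpha}\varphi^2\ +\ C_\epsilon K .
\end{equation*}
Now I would apply (2.11) to $u$ on the left and Hölder's inequality with exponents $\tfrac n2$ and $\tfrac n{n-2}$ to the first term on the right; both produce the common factor $\big(\int_M\psi^{\frac{2n\alpha}{n-2}}\varphi^{\frac{2n}{n-2}}\big)^{\frac{n-2}{n}}=\|u\|_{L^{2n/(n-2)}}^2$, leaving
\begin{equation*}
\Big(\big(1-\tfrac{\mu}{\alpha^2}\big)C(n)-\alpha\,\|q_+\|_{L^{n/2}}\Big)\,\|u\|_{L^{2n/(n-2)}}^2\ \le\ C_\epsilon K .
\end{equation*}

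Finally I would specialise $\varphi=\varphi_r$ with $\varphi_r\equiv1$ on $B_r$, $\mathrm{supp}\,\varphi_r\subset B_{2r}$ and $|\nabla\varphi_r|\le2/r$. Because $2\alpha=n/2$, the hypothesis gives $K=K_r\le\tfrac{4}{r^2}\int_{B_{2r}}|\psi|^{n/2}=o(1)$, while by monotone convergence $\|u_r\|_{L^{2n/(n-2)}}^2\uparrow\|\psi^{\alpha}\|_{L^{2n/(n-2)}}^2\in(0,+\infty]$ since $\psi\not\equiv0$. Letting $r\to\infty$ forces $\big(1-\tfrac{\mu}{\alpha^2}\big)C(n)-\alpha\|q_+\|_{L^{n/2}}\le0$, and then $\epsilon\to0$ gives $\|q_+\|_{L^{n/2}}\ge\frac{(\alpha^2-\mu)C(n)}{\alpha^3}=\frac{16C(n)(A+n/2-1)}{n^2}$. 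The main obstacle is the absorption step in the third paragraph: recognising that $\mu I$ must be absorbed rather than dropped, carrying the sharp constant through Young's inequality, and noticing that $A+\tfrac n2-1>0$ is precisely the borderline for this to succeed; the only other delicate point is the $\epsilon$‑regularisation legitimising the singular test function $\psi^{2\alpha-2}\varphi^2$.
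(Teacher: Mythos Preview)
The paper does not prove Lemma 2.2 at all; it is quoted verbatim from [PRS] and used as a black box. So there is nothing in the paper to compare your argument against, and the relevant question is simply whether your proof is sound. It is: testing the weak inequality against $\psi^{2\alpha-2}\varphi^2$ with $\alpha=n/4$, eliminating the cross term via the exact identity for $\int|\nabla(\psi^\alpha\varphi)|^2$, absorbing the remaining $\mu I$ term using Young's inequality (this is where $A+\tfrac n2-1>0$ enters), and then applying (2.11) together with H\"older's inequality and the cut--off family $\varphi_r$ is exactly the standard Moser--type argument behind this lemma, and your final constant $\frac{(\alpha^2-\mu)C(n)}{\alpha^3}=\frac{16C(n)(A+n/2-1)}{n^2}$ is correct.

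Two small points of presentation. First, in your absorption step you write the limiting coefficient $1-\mu/\alpha^2$ while still carrying the finite constant $C_\epsilon$; strictly speaking the coefficient before passing to the limit is $1-\mu/((1\mp\epsilon)\alpha^2)$, and the correct order of limits is the one you describe at the end (first $r\to\infty$ with $\epsilon$ fixed, so that $C_\epsilon K_r\to 0$, and only then $\epsilon\to 0$). Second, a slightly cleaner route avoids splitting into the cases $\mu\gtrless 0$: apply Young directly to the cross term in $\int q\,\psi^{2\alpha}\varphi^2\ge (A+2\alpha-1)I+2X$ to get $(A+2\alpha-1-\epsilon)I\le \int q_+\psi^{2\alpha}\varphi^2+\epsilon^{-1}K$, then bound $\int|\nabla u|^2\le(1+\epsilon)\alpha^2 I+(1+\epsilon^{-1})K$ and proceed; this uses the hypothesis $A+\tfrac n2-1>0$ in the same place and produces the same sharp constant without the case distinction.
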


\section{ Rigidity of K\"ahler-Einstein manifolds}

In this section, we consider complete K\"ahler manifolds with constant
scalar curvature. Some $L^p$ and $L^\infty $ pinching results will be
established to characterize K\"ahler-Einstein manifolds among complete
K\"ahler manifolds with constant scalar curvature.

Suppose $(M,g,J)$ is a K\"ahler manifold with constant scalar curvature.
First, we intend to derive the Weitzenb\"ock formula for the traceless Ricci
tensor $E$. Note that $E$ is a $(1,1)$-type Codazzi tensor and
 $$|E|^2=2|E_{\alpha \overline{\beta }}|^2.$$ For simplicity, one may choose a
normal complex coordinate system at a given point. Using (2.6), (2.7) and
(2.10), a direct computation gives
\begin{align}
\nonumber \frac 12\triangle |E|^2 =&\nabla _\lambda \nabla _{\overline{\lambda }
}|E_{\alpha \overline{\beta }}|^2+\nabla _{\overline{\lambda }}\nabla
_\lambda |E_{\alpha \overline{\beta }}|^2 \\
\nonumber =&4E_{\alpha \overline{\beta },\lambda }E_{\overline{\alpha }\beta ,\overline{
\lambda }}+2E_{\alpha \overline{\beta }}E_{\overline{\alpha }\beta ,\lambda
\overline{\lambda }}+2E_{\alpha \overline{\beta }}E_{\overline{\alpha }\beta
,\overline{\lambda }\lambda }\\
\nonumber =& 4E_{\alpha \overline{\beta },\lambda }E_{\overline{\alpha }\beta ,\overline{
\lambda }}+4E_{\alpha \overline{\beta }}R_{\beta \overline{\gamma }
}E_{\gamma \overline{\alpha }}+4E_{\alpha \overline{\beta }}E_{\lambda
\overline{\gamma }}R_{\gamma \overline{\alpha }\beta \overline{\lambda }}\\
\nonumber =& 4E_{\alpha \overline{\beta },\lambda }E_{\overline{\alpha }\beta ,\overline{
\lambda }}+\frac{4m}{m+2}tr(E_{\alpha \overline{\beta }})^3+4E_{\alpha
\overline{\beta }}E_{\lambda \overline{\gamma }}B_{\gamma \overline{\alpha }
\beta \overline{\lambda }}+\frac{4R}{m+1}|E_{\alpha \overline{\beta }}|^2\\
\nonumber =& |\nabla _{}E|^2+\frac{4m}{m+2}tr(E_{\alpha \overline{\beta }})^3+4E_{\alpha
\overline{\beta }}E_{\lambda \overline{\gamma }}B_{\gamma \overline{\alpha }
\beta \overline{\lambda }}+\frac{2R}{m+1}|E|^2  \tag{3.1}
\end{align}
where $|\nabla _{}E|^2=4E_{\alpha \overline{\beta },\lambda }E_{\overline{%
\alpha }\beta ,\overline{\lambda }}$.

In [HV], the authors deduced the Kato's inequality for a traceless Codazzi
tensor. Using a similar method, we may derive the following Kato's
inequality for a $(1,1)$-type Codazzi tensor.

\begin{lemma} Let $C$ be a traceless $(1,1)$-type Codazzi tensor field
on $(M^m,g,J)$. Then
\begin{equation}
|\nabla _{}|C||^2\leq \frac m{m+1}|\nabla _{}C|^2  \tag{3.2}
\end{equation}
at any point where $|C|\neq 0$. In addition, the constant on the right hand
side of the inequality is optimal.
\end{lemma}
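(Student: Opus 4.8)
The plan is to prove the Kato inequality (3.2) by working pointwise at a point where $|C|\neq 0$, mimicking the argument of [HV] for real traceless Codazzi tensors but adapted to the Kähler/Hermitian setting. First I would pick a normal complex coordinate system at the given point $p$ so that the Christoffel symbols vanish there and $g_{\alpha\overline\beta}=\delta_{\alpha\beta}$. The quantity to control is $|\nabla|C||^2$; since $|C|^2 = 2C_{\alpha\overline\beta}C_{\overline\alpha\beta}$ (using that $C$ is Hermitian symmetric, $C_{\overline\alpha\beta}=\overline{C_{\alpha\overline\beta}}$), differentiating gives $|C|\,\nabla_\lambda|C| = \tfrac12(\nabla_\lambda|C|^2) = C_{\alpha\overline\beta,\lambda}C_{\overline\alpha\beta} + C_{\alpha\overline\beta}C_{\overline\alpha\beta,\lambda}$, so that at $p$ one has $|C|^2\,|\nabla|C||^2 = 2\,g^{\lambda\overline\mu}\bigl(C_{\alpha\overline\beta,\lambda}C_{\overline\alpha\beta}\bigr)\overline{\bigl(C_{\gamma\overline\delta,\mu}C_{\overline\gamma\delta}\bigr)}$ after summing the $\lambda$ and $\overline\lambda$ contributions. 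The goal is to bound this by $\tfrac{m}{m+1}|C|^2|\nabla C|^2$ where $|\nabla C|^2 = 4C_{\alpha\overline\beta,\lambda}C_{\overline\alpha\beta,\overline\lambda}$.

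The key step is an algebraic (Cauchy–Schwarz type) estimate that exploits the Codazzi symmetry $C_{\gamma\overline\delta,\lambda}=C_{\lambda\overline\delta,\gamma}$ together with the trace-free condition $g^{\gamma\overline\delta}C_{\gamma\overline\delta}=0$, which also forces $g^{\gamma\overline\delta}C_{\gamma\overline\delta,\lambda}=0$. I would fix the index $\lambda$ and consider the two vectors in the $C^{m^2}$-type inner product: roughly, $C_{\alpha\overline\beta}$ (thought of as a fixed "direction" in the space of Hermitian matrices, with the $\lambda$-derivative contracted against it) versus the full gradient $C_{\alpha\overline\beta,\lambda}$. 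A naive Cauchy–Schwarz gives the constant $1$ rather than $\tfrac{m}{m+1}$; the improvement comes from decomposing the covariant derivative tensor $C_{\alpha\overline\beta,\lambda}$ into its "pure trace part" (the piece proportional to $g$ in some pair of indices, which by the Codazzi and trace-free conditions is tightly constrained) and the orthogonal complement, and observing that the component of $\nabla C$ that can contribute to $\nabla|C|$ lives in a subspace of codimension depending on $m$. Concretely, using the Codazzi identity one writes $C_{\alpha\overline\beta,\lambda}$ and considers the $(m+1)$-dimensional (for each fixed pair) span that must contain the relevant projection; the factor $\tfrac{m}{m+1}$ is the ratio reflecting that one of the $m+1$ "slots" is unavailable because of trace-freeness. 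I would organize this as: (i) split $\nabla C = T + W$ where $T$ is the part of $\nabla C$ determined by $\nabla|C|$ (the component "visible" to the norm), (ii) use Codazzi to express the symmetrized derivative and deduce that $|T|^2 \le \tfrac{m}{m+1}|\nabla C|^2$ via a counting/Lagrange-multiplier argument on the constrained quadratic form, and (iii) conclude $|\nabla|C||^2 = |T|^2/|C|^2 \cdot (\text{something}) \le \tfrac{m}{m+1}|\nabla C|^2$.

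For optimality of the constant $\tfrac{m}{m+1}$, I would exhibit an explicit algebraic example: a traceless Hermitian $(1,1)$-tensor together with a candidate value of its covariant derivative satisfying the Codazzi symmetry at a point (this is purely a question of finding symbols $C_{\alpha\overline\beta}$ and $C_{\alpha\overline\beta,\lambda}$ with $C_{\alpha\overline\beta,\lambda}=C_{\lambda\overline\beta,\alpha}$ and $C_{\alpha\overline\alpha}=C_{\alpha\overline\alpha,\lambda}=0$) for which equality holds in the Cauchy–Schwarz step; a rank-one-type configuration, e.g. $C$ diagonal with entries following the extremal pattern of Lemma 2.1, together with $\nabla C$ supported appropriately, should saturate the bound. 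Since such symbols can always be realized as the $1$-jet of an actual tensor field on a Kähler manifold, this proves sharpness.

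The main obstacle I anticipate is getting the constant exactly $\tfrac{m}{m+1}$ rather than something weaker: the bookkeeping in the complex/Hermitian setting (keeping track of which index contractions are allowed, the factor of $2$ from $|C|^2 = 2|C_{\alpha\overline\beta}|^2$, and the interplay of holomorphic versus antiholomorphic derivatives in $|\nabla C|^2 = 4C_{\alpha\overline\beta,\lambda}C_{\overline\alpha\beta,\overline\lambda}$) is delicate, and the refined Kato inequality only works because the Codazzi condition is used twice — once to symmetrize a derivative index into the tensor and once via the trace-free constraint. I would handle this by reducing, at the fixed point $p$ and after an orthonormal choice, to a finite-dimensional extremal problem: maximize $\bigl|\sum C_{\alpha\overline\beta,\lambda}\,\overline{C_{\alpha\overline\beta}}\bigr|^2$ (summed suitably over $\lambda$) subject to $\sum_\lambda |C_{\alpha\overline\beta,\lambda}|^2$ fixed and the linear Codazzi/trace constraints, and compute the maximum by Lagrange multipliers; the factor $\tfrac{m}{m+1}$ should drop out of the resulting eigenvalue computation.
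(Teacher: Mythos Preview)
Your general strategy (refined Kato via Codazzi plus trace-freeness, reduced to a pointwise extremal problem) is the right one, and with enough work a Lagrange-multiplier or representation-theoretic argument in the style of Calderbank--Gauduchon--Herzlich would indeed produce the constant $\tfrac{m}{m+1}$. But as written the proposal has a genuine gap: the crucial step --- actually extracting $\tfrac{m}{m+1}$ --- is asserted rather than computed. Sentences like ``the factor $\tfrac{m}{m+1}$ should drop out of the resulting eigenvalue computation'' and ``$|T|^2\le\tfrac{m}{m+1}|\nabla C|^2$ via a counting/Lagrange-multiplier argument'' are exactly where the content of the lemma lies, and you have not specified the decomposition $\nabla C=T+W$, identified the constraint set, or done the optimization. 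The heuristic that ``one of the $m+1$ slots is unavailable'' is suggestive but is not a proof; in particular you never say which use of the Codazzi identity produces the extra symmetry that forces the improved constant.

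The paper's proof avoids all of this abstraction by a simplification you omit: since $C$ is Hermitian symmetric, at the given point one may choose complex coordinates that simultaneously make $g_{\alpha\overline\beta}=\delta_{\alpha\beta}$ \emph{and} diagonalize $C$, say $C_{\alpha\overline\beta}=\lambda_\alpha\delta_{\alpha\beta}$. This reduces $|\nabla|C|^2|^2$ to a sum over $\gamma$ of $|\sum_\alpha \lambda_\alpha C_{\alpha\overline\alpha,\gamma}|^2$, while the Codazzi identity $C_{\alpha\overline\gamma,\gamma}=C_{\gamma\overline\gamma,\alpha}$ is used once to show $|\nabla C|^2\ge 4\sum_\gamma\{|C_{\gamma\overline\gamma,\gamma}|^2+2\sum_{\alpha\neq\gamma}|C_{\alpha\overline\alpha,\gamma}|^2\}$. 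Setting $\mu_\alpha=C_{\alpha\overline\alpha,\gamma}$ for fixed $\gamma$, the trace-free condition gives $\sum_\alpha\mu_\alpha=0$, from which one gets the elementary inequality $|\mu_\gamma|^2+2\sum_{\alpha\neq\gamma}|\mu_\alpha|^2\ge\tfrac{m+1}{m}\sum_\alpha|\mu_\alpha|^2$; Cauchy--Schwarz then finishes the job. This diagonalization is the missing idea in your sketch: it turns an abstract extremal problem on $\mathbb{C}^{m^2}$ into a one-line scalar inequality, and it is what makes the constant explicit rather than ``something that should drop out.''
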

\begin{proof}
 Clearly the inequality (3.2) is equivalent to
\begin{equation}
\frac 14|\nabla _{}|C|^2|^2\leq \frac m{m+1}|C|^2|\nabla _{}C|^2.  \tag{3.3}
\end{equation}
For any given point $p\in M$, one may choose a system of complex coordinates
$(z_\alpha )$ such that
\begin{equation*}
g_{\alpha \overline{\beta }}=\delta _{\alpha
\overline{\beta }}
\quad \text{and\quad }C_{\alpha \overline{\beta }}=\lambda _\alpha \delta
_{\alpha \overline{\beta }}
\end{equation*}
at this point. Since $(C_{\alpha \overline{%
\beta }})$ is Hermitian symmetric, each eigenvalue $\lambda _\alpha $ is
real. Write
$$C=C_{\alpha \overline{\beta }}dz^\alpha \otimes dz^{\overline{%
\beta }}+C_{\overline{\alpha }\beta }dz^{\overline{\alpha }}\otimes dz^\beta.$$
So
 $$|C|^2=2\ C_{\alpha \overline{\beta }}C^{\alpha \overline{\beta }%
}=2C_{\alpha \overline{\beta }}C_{\overline{\alpha }\beta }.$$
First, we compute
\begin{eqnarray*}
|\nabla _{}|C|^2|^2&=&(|C|^2)_\gamma (|C|^2)_{\overline{\gamma }}+(|C|^2)_{%
\overline{\gamma }}(|C|^2)_\gamma \\
&=& 32C_{\alpha \overline{\beta },\gamma }C_{\overline{\alpha }\beta }C_{\mu
\overline{v},\overline{\gamma }}C_{\overline{\mu }v} \\
&=& 32(\sum_\alpha C_{\alpha \overline{\alpha },\gamma }C_{\overline{\alpha }%
\alpha })(\sum_\mu C_{\mu \overline{\mu },\overline{\gamma }}C_{\overline{%
\mu }\mu }) \\
&=& 32\sum_\gamma |\sum_\alpha C_{\alpha \overline{\alpha },\gamma }C_{%
\overline{\alpha }\alpha }|^2 \\
&\leq& 32\sum_\gamma (\sum_\alpha |C_{\alpha \overline{\alpha }}||C_{\alpha
\overline{\alpha },\gamma }|)^2.
\end{eqnarray*}
Next, since $C$ is a $(1,1)$-type Codazzi tensor, we discover
\begin{align*}
|\nabla _{}C|^2 &=4\sum_{\alpha ,\beta ,\gamma }|C_{\alpha \overline{\beta }%
,\gamma }|^2 \\
&=4\sum_\gamma \{|C_{\gamma \overline{\gamma },\gamma }|^2+\sum_{\alpha \neq
\gamma }|C_{\alpha \overline{\alpha },\gamma }|^2+\sum_{\alpha \neq \gamma
}|C_{\alpha \overline{\gamma },\gamma }|^2\}+\text{positive terms} \\
&\geq 4\sum_\gamma \{|C_{\gamma \overline{\gamma },\gamma }|^2+2\sum_{\alpha
\neq \gamma }|C_{\alpha \overline{\alpha },\gamma }|^2\}.
\end{align*}
In order to prove (3.3), one only needs to verify the following inequality
\begin{equation}
\left. \sum_\gamma (\sum_\alpha |C_{\alpha \overline{\alpha }}||C_{\alpha
\overline{\alpha },\gamma }|)^2\leq \frac m{2(m+1)}|C|^2\sum_\gamma
\{|C_{\gamma \overline{\gamma },\gamma }|^2+2\sum_{\alpha \neq \gamma
}|C_{\alpha \overline{\alpha },\gamma }|^2\}.\right.  \tag{3.4}
\end{equation}
Set $\mu _\alpha =C_{\alpha \overline{\alpha },\gamma }$ for any fixed $%
\gamma $. Note that $\lambda _\alpha =C_{\alpha \overline{\alpha }}$.
Consequently
\begin{equation}
\sum_\alpha \lambda _\alpha =0\text{,\quad }\sum_\alpha \mu _\alpha =0,
\tag{3.5}
\end{equation}
since $C$ is traceless. It follows from (3.5) and the Cauchy-Schwarz
inequality that
\begin{eqnarray*}
|\mu _\gamma |^2 =|\sum_{\alpha \neq \gamma }\mu _\alpha |^2\leq
(\sum_{\alpha \neq \gamma }|\mu _\alpha |)^2
\leq (m-1)(\sum_{\alpha \neq \gamma }|\mu _\alpha |^2).
\end{eqnarray*}
Hence
\begin{align}
\nonumber |\mu _\gamma |^2+2\sum_{\alpha \neq \gamma }|\mu _\alpha |^2
&=|\mu _\gamma|^2+\frac 1m[(m-1)\sum_{\alpha \neq \gamma }|\mu _\alpha
|^2+(m+1)\sum_{\alpha \neq \gamma }|\mu _\alpha |^2] \\
\nonumber &\geq |\mu _\gamma |^2+\frac 1m[|\mu _\gamma |^2+(m+1)\sum_{\alpha \neq
\gamma }|\mu _\alpha |^2] \\
&= \frac{m+1}m\sum_\alpha |\mu _\alpha |^2.  \tag{3.6}
\end{align}
Using (3.6) and the Cauchy-Schwarz inequality, we find
\begin{eqnarray*}
\frac{(\sum_\alpha |\lambda _\alpha |^2)(|\mu _\gamma |^2+2\sum_{\alpha \neq
\gamma }|\mu _\alpha |^2)}{(\sum_\alpha |\lambda _\alpha ||\mu _\alpha |)^2}
&\geq &\frac{m+1}m\frac{(\sum_\alpha |\lambda _\alpha |^2)(\sum_\alpha |\mu
_\alpha |^2)}{(\sum_\alpha |\lambda _\alpha ||\mu _\alpha |)^2} \\
&\geq &\frac{m+1}m
\end{eqnarray*}
which implies immediately the inequality (3.4).
\end{proof}

Another equivalent expression of (2.5) is
\begin{align*}
R_{\alpha \overline{\beta }\gamma \overline{\delta }}=& B_{\alpha \overline{%
\beta }\gamma \overline{\delta }}-\frac 1{m+2}(E_{\alpha \overline{\beta }%
}g_{\gamma \overline{\delta }}+E_{\gamma \overline{\beta }}g_{\alpha
\overline{\delta }}+g_{\alpha \overline{\beta }}E_{\gamma \overline{\delta }%
}+g_{\gamma \overline{\beta }}E_{\alpha \overline{\delta }}) \\
&-\frac R{m(m+1)}(g_{\alpha \overline{\beta }}g_{\gamma \overline{\delta }%
}+g_{\gamma \overline{\beta }}g_{\alpha \overline{\delta }}),
\end{align*}
which tells us that the curvature tensor of a K\"ahler manifold can be
decomposed into three orthogonal parts with respect to the Hermitian
structure. Now we want to estimate the third term on the right hand side of
(3.1) by using the same technique as in [Hu] for treating a similar
contracted term of the traceless Ricci tensor and the Weyl tensor.

\begin{lemma} The inequality
\begin{equation*}
|E_{\alpha \overline{\beta }}E_{\lambda \overline{\gamma }}B_{\gamma
\overline{\alpha }\beta \overline{\lambda }}|\leq \frac 14\sqrt{\frac{%
2m^2+4m+3}{2(m+1)(m+2)}}|B||E|^2
\end{equation*}
holds on any K\"ahler $m$-manifold.
\end{lemma}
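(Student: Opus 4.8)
The plan is to convert the estimate into a pointwise algebraic inequality for the eigenvalues of $E$ together with the ``diagonal block'' of $B$, in the spirit of the argument used in [Hu] for the analogous contracted term of the traceless Ricci tensor and the Weyl tensor. Fix a point $p$ and choose a complex coordinate system with $g_{\alpha\overline\beta}=\delta_{\alpha\overline\beta}$ and $E_{\alpha\overline\beta}=e_\alpha\delta_{\alpha\overline\beta}$ at $p$; since $(E_{\alpha\overline\beta})$ is Hermitian symmetric the $e_\alpha$ are real, and, $E$ being traceless, $\sum_\alpha e_\alpha=0$ and $|E|^2=2\sum_\alpha e_\alpha^2$. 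Plugging the diagonal form of $E$ into the contraction (the two factors of $E$ force the index identifications $\beta=\alpha$ and $\gamma=\lambda$), only the components $B_{\lambda\overline\alpha\alpha\overline\lambda}$ survive, and by the symmetries (2.4) one has $B_{\lambda\overline\alpha\alpha\overline\lambda}=B_{\alpha\overline\alpha\lambda\overline\lambda}=:b_{\alpha\lambda}$, where $(b_{\alpha\lambda})$ is a \emph{real symmetric} matrix; moreover the trace identity (2.5) in the form $\sum_\alpha B_{\alpha\overline\alpha\lambda\overline\lambda}=0$ shows that $(b_{\alpha\lambda})$ has vanishing row sums. Hence
\[
E_{\alpha\overline\beta}E_{\lambda\overline\gamma}B_{\gamma\overline\alpha\beta\overline\lambda}
=\sum_{\alpha,\lambda}e_\alpha e_\lambda\,b_{\alpha\lambda}
=-\frac12\sum_{\alpha\neq\lambda}b_{\alpha\lambda}(e_\alpha-e_\lambda)^2 ,
\]
the last equality following by subtracting $\frac12\sum_{\alpha,\lambda}(e_\alpha^2+e_\lambda^2)b_{\alpha\lambda}=0$, which vanishes because $(b_{\alpha\lambda})$ has zero row and column sums.

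I would then estimate the right-hand side by a weighted Cauchy--Schwarz inequality, keeping track of the exact multiplicities with which the entries $b_{\alpha\lambda}$, and more generally the components of $B$ linked to them through the symmetries (2.4) and the trace identity (2.5), contribute to $|B|^2$; this gives an inequality $\bigl|\sum_{\alpha,\lambda}e_\alpha e_\lambda b_{\alpha\lambda}\bigr|\le a(m)\,|B|\bigl(\sum_{\alpha,\lambda}w_{\alpha\lambda}e_\alpha^2 e_\lambda^2\bigr)^{1/2}$ with suitable positive weights $w_{\alpha\lambda}$. The remaining, purely elementary, step is to bound $\sum_{\alpha,\lambda}w_{\alpha\lambda}e_\alpha^2 e_\lambda^2$ — equivalently a combination of $\sum_{\alpha\neq\lambda}(e_\alpha-e_\lambda)^4$ and $\sum_\alpha e_\alpha^4$ — by a multiple of $(\sum_\alpha e_\alpha^2)^2=\tfrac14|E|^4$, using only the constraint $\sum_\alpha e_\alpha=0$. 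Combining the two bounds and optimizing over the free weight yields $|E_{\alpha\overline\beta}E_{\lambda\overline\gamma}B_{\gamma\overline\alpha\beta\overline\lambda}|\le \tfrac14\sqrt{(2m^2+4m+3)/(2(m+1)(m+2))}\,|B||E|^2$.

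The main obstacle is the middle step: correctly identifying, via (2.4) and (2.5), the family of components of $B$ that are forced to coincide with (or to be controlled in terms of) the $b_{\alpha\lambda}$, so as to fix the weights in the Cauchy--Schwarz estimate that produce the stated constant. This is precisely the point where the technique of [Hu] must be transcribed into the K\"ahler (Bochner) setting, and it is what accounts for the particular numerical factor $\sqrt{(2m^2+4m+3)/(2(m+1)(m+2))}$; the diagonalization of $E$, the reduction via (2.4)--(2.5), and the elementary inequality for the $e_\alpha$ are all routine.
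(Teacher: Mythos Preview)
Your reduction to the diagonal form of $E$ and the identity $\sum_{\alpha,\lambda}e_\alpha e_\lambda b_{\alpha\lambda}=-\tfrac12\sum_{\alpha\neq\lambda}b_{\alpha\lambda}(e_\alpha-e_\lambda)^2$ are fine, but from that point on the proposal is a plan, not a proof: you explicitly label the ``middle step'' as the main obstacle and leave it undone. Moreover, the mechanism you describe for that step --- tracking, via (2.4)--(2.5), which components of $B$ are ``linked'' to the $b_{\alpha\lambda}$ and feeding the multiplicities into a weighted Cauchy--Schwarz --- cannot by itself yield the stated constant. The symmetries (2.4) and the trace identity (2.5) impose no relation between the diagonal block $b_{\alpha\lambda}=B_{\alpha\overline\alpha\lambda\overline\lambda}$ and the genuinely off-diagonal components $B_{\alpha\overline\beta\gamma\overline\delta}$ with $\{\alpha,\gamma\}\neq\{\beta,\delta\}$; those components are free, so no bookkeeping of ``linked'' entries will improve the crude estimate $\sum b_{\alpha\lambda}^2\le\tfrac14|B|^2$. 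The naive Cauchy--Schwarz then gives the constant $\tfrac14$, which is strictly larger than $\tfrac14\sqrt{(2m^2+4m+3)/(2(m+1)(m+2))}$.

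The sharpening does not come from the $B$ side at all but from the $E$ side, and this is exactly the technique of [Hu] that the paper transcribes. One defines the curvature-like tensor $V$ with components $V_{\alpha\overline\beta\overline\gamma\delta}=E_{\alpha\overline\beta}E_{\delta\overline\gamma}+E_{\alpha\overline\gamma}E_{\delta\overline\beta}$ (extended with the K\"ahler symmetries), so that $8\,E_{\alpha\overline\beta}E_{\lambda\overline\gamma}B_{\gamma\overline\alpha\beta\overline\lambda}=\langle B,V\rangle$. Since $B$ is totally trace-free, $\langle B,V\rangle=\langle B,V_1\rangle$, where $V_1$ is the ``Bochner part'' in the orthogonal decomposition $V=V_1+V_2+V_3$ of K\"ahler curvature-type tensors. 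One then computes $|V|^2$, $|V_2|^2$, $|V_3|^2$ explicitly in terms of $|E|^4$ and $Z=E_{\alpha\overline\beta}E_{\beta\overline\gamma}E_{\gamma\overline\delta}E_{\delta\overline\alpha}$, uses $Z\le\tfrac14|E|^4$, and obtains $|V_1|^2\le\frac{4m^2+8m+6}{(m+1)(m+2)}|E|^4$; a single Cauchy--Schwarz $|\langle B,V_1\rangle|\le|B|\,|V_1|$ finishes. The constant is produced by subtracting $|V_2|^2+|V_3|^2$ from $|V|^2$, i.e.\ by projecting $V$ onto the Bochner subspace --- not by exploiting extra relations among the components of $B$.
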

\begin{proof}
We define a curvature-like tensor
\begin{align*}
V\ =&\ \ (E_{\alpha \overline{\beta }}E_{\delta \overline{\gamma }}+E_{\alpha
\overline{\gamma }}E_{\delta \overline{\beta }})dz^\alpha \otimes dz^{%
\overline{\beta }}\otimes dz^{\overline{\gamma }}\otimes dz^\delta \\
&+(E_{\beta \overline{\alpha }}E_{\gamma \overline{\delta }}+E_{\gamma
\overline{\alpha }}E_{\beta \overline{\delta }})dz^{\overline{\alpha }%
}\otimes dz^\beta \otimes dz^\gamma \otimes dz^{\overline{\delta }} \\
&-(E_{\beta \overline{\alpha }}E_{\delta \overline{\gamma }}+E_{\delta
\overline{\alpha }}E_{\beta \overline{\gamma }})dz^{\overline{\alpha }%
}\otimes dz^\beta \otimes dz^{\overline{\gamma }}\otimes dz^\delta \\
&-(E_{\alpha \overline{\beta }}E_{\gamma \overline{\delta }}+E_{\alpha
\overline{\delta }}E_{\gamma \overline{\beta }})dz^\alpha \otimes dz^{%
\overline{\beta }}\otimes dz^\gamma \otimes dz^{\overline{\delta }}. \tag{3.7}
\end{align*}
Clearly $V$ has the same symmetries as the curvature tensor of a K\"ahler
manifold. So it can be decomposed into three orthogonal parts with respect
to the Hermitian structure: $V=V_1+V_2+V_3$. Here $V_1$, $V_2$ and $V_3$
correspond to the `Bochner curvature' part, the `traceless Ricci' part and
the `scalar curvature' part of $V$ respectively. To express $V_i$
explicitly, let's introduce
\begin{equation*}
V_{\alpha \overline{\beta }}^E=V_{\alpha \overline{\beta }}^{Ric}-\frac
Kmg_{\alpha \overline{\beta }}
\end{equation*}
where $$V_{\alpha \overline{\beta }}^{Ric}=g^{\gamma \overline{\delta }%
}V_{\alpha \overline{\beta }\overline{\delta }\gamma }=g^{\gamma \overline{%
\delta }}E_{\alpha \overline{\delta }}E_{\gamma \overline{\beta }}$$
and
$$K=g^{\alpha \overline{\beta }}V_{\alpha \overline{\beta }}^{Ric}=\frac
12|E|^2.$$
Therefore the components of $V_2$ and $V_3$ are given by
\begin{equation*}
(V_2)_{\alpha \overline{\beta }\gamma \overline{\delta }}=-\frac
1{m+2}(V_{\alpha \overline{\beta }}^Eg_{\gamma \overline{\delta }}+V_{\gamma
\overline{\beta }}^Eg_{\alpha \overline{\delta }}+g_{\alpha \overline{\beta }%
}V_{\gamma \overline{\delta }}^E+g_{\gamma \overline{\beta }}V_{\alpha
\overline{\delta }}^E)
\end{equation*}
and
\begin{equation*}
(V_3)_{\alpha \overline{\beta }\gamma \overline{\delta }}=-\frac
K{m(m+1)}(g_{\alpha \overline{\beta }}g_{\gamma \overline{\delta }%
}+g_{\gamma \overline{\beta }}g_{\alpha \overline{\delta }}).
\end{equation*}

As before, we may assume $g_{\alpha \overline{\beta }}=\delta _{\alpha
\overline{\beta }}$ at a given point. From (2.4) and (3.7), we have
\begin{equation}
\left. 8E_{\alpha \overline{\beta }}E_{\lambda \overline{\gamma }}B_{\gamma
\overline{\alpha }\beta \overline{\lambda }}=\langle B,V\rangle =\langle
B,V_1\rangle \right.  \tag{3.8}
\end{equation}
where $\langle \cdot ,\cdot \rangle $ denotes the Hermitian inner product
induced from $g$. Set
 $$Z=E_{\alpha \overline{\beta }}E_{\beta \overline{%
\gamma }}E_{\gamma \overline{\delta }}E_{\delta \overline{\alpha }}.$$
 A direct calculation yields that
\begin{align*}
\frac 14|V|^2&=(E_{\alpha \overline{\beta }}E_{\gamma \overline{\delta }%
}+E_{\alpha \overline{\delta }}E_{\gamma \overline{\beta }})(E_{\overline{%
\alpha }\beta }E_{\overline{\gamma }\delta }+E_{\overline{\alpha }\delta }E_{%
\overline{\gamma }\beta }) \\
&=\frac 12|E|^4+2Z, \tag{3.9}\\
&{}\\
\frac 14|V_2|^2&=\frac 4{m+2}V_{\alpha \overline{\beta }}^EV_{\overline{%
\alpha }\beta }^E \\
&=\frac 4{m+2}(E_{\alpha \overline{\lambda }}E_{\lambda \overline{\beta }%
}-\frac 1{2m}|E|^2\delta _{\alpha \overline{\beta }})(E_{\overline{\alpha }%
\mu }E_{\overline{\mu }\beta }-\frac 1{2m}|E|^2\delta _{\overline{\alpha }%
\beta }) \\
& =\frac 4{m+2}Z-\frac 1{m(m+2)}|E|^4,  \tag{3.10}
\end{align*}
and
\begin{align*}
\frac 14|V_3|^2=\frac 1{2m(m+1)}|E|^4.  \tag{3.11}
\end{align*}
From (3.9), (3.10) and (3.11), we deduce
\begin{align}
\nonumber |V_1|^2&=|V|^2-|V_2|^2-|V_3|^2 \\
\nonumber &=\frac{8m}{m+2}Z+\frac{2(m+1)(m+2)+2}{(m+1)(m+2)}|E|^4 \\
\nonumber &\leq \frac{2m}{m+2}|E|^4+\frac{2(m+1)(m+2)+2}{(m+1)(m+2)}|E|^4 \\
&=\frac{4m^2+8m+6}{(m+1)(m+2)}|E|^4.\tag{3.12}
\end{align}
It follows from (3.8) and (3.12) that
\begin{eqnarray*}
|E_{\alpha \overline{\beta }}E_{\lambda \overline{\gamma }}B_{\gamma
\overline{\alpha }\beta \overline{\lambda }}| &\leq &\frac 18|\langle
B,V_1\rangle | \\
&\leq &\frac 18|B||V_1| \\
&\leq &\frac 14\sqrt{\frac{2m^2+4m+3}{2(m+1)(m+2)}}|B||E|^2.
\end{eqnarray*}
\end{proof}

Using Lemmas 2.1 and 3.2, we get from (3.1) that
\begin{equation}
\frac 12\triangle |E|^2\geq |\nabla _{}E|^2-\frac{(m-2)\sqrt{2m}}{(m+2)\sqrt{%
(m-1)}}|E|^3-\sqrt{\frac{2m^2+4m+3}{2(m+1)(m+2)}}|B||E|^2
+\frac{2R}{m+1}|E|^2 \tag{3.13}
\end{equation}
and thus using Lemma 3.1, we find
\begin{align*}
|E|\triangle |E|\geq& |\nabla _{}E|^2-|\nabla _{}|E||^2-\frac{(m-2)\sqrt{2m}}{%
(m+2)\sqrt{(m-1)}}|E|^3
-\sqrt{\frac{2m^2+4m+3}{2(m+1)(m+2)}}|B||E|^2\\
&+\frac{2R}{m+1}|E|^2 \\
\geq& \frac 1m|\nabla _{}|E||^2+\frac{2R}{m+1}|E|^2-\frac{(m-2)\sqrt{2m}}{%
(m+2)\sqrt{(m-1)}}|E|^3\\
&-\sqrt{\frac{2m^2+4m+3}{2(m+1)(m+2)}}|B||E|^2.\tag{3.14}
\end{align*}

\begin{theorem}
 Let $(M,g,J)$ be a complete noncompact K\"ahler $m$%
-manifold ($m\geq 2$) with zero scalar curvature and positive Yamabe
constant $\Lambda (M,g)$. Assume that
\begin{equation}
\sqrt{2}||E||_{L^m(M)}+||B||_{L^m(M)}<\frac{4\Lambda (M,g)(m^2-m+1)}{m^3}%
\sqrt{\frac{2(m+1)(m+2)}{2m^2+4m+3}}.  \tag{3.15}
\end{equation}
Then $M$ is a Ricci-flat K\"ahler manifold.
\end{theorem}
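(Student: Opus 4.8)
The plan is to recognize the differential inequality (3.14) as being of the type handled by the gap result Lemma 2.2, applied to $\psi=|E|$, and then to check that the pinching hypothesis (3.15) is exactly what is needed to exclude $\psi\not\equiv 0$. First, since $R=0$ the Sobolev inequality (2.2) reduces to
\[
\Lambda(M,g)\Big(\int_M|u|^{\frac{2m}{m-1}}dV_g\Big)^{\frac{m-1}{m}}\le\int_M|\nabla u|^2\,dV_g ,
\]
which is precisely the Euclidean-type Sobolev inequality (2.11) in real dimension $n=2m$ with constant $C(n)=\Lambda(M,g)$, since $\frac{2m}{m-1}=\frac{2n}{n-2}$ and $\frac{m-1}{m}=\frac{n-2}{n}$ when $n=2m$. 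Hence the ambient hypothesis of Lemma 2.2 is satisfied.

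Next I would set $\psi=|E|$, which is nonnegative and lies in $\mathrm{Lip}_{loc}(M)$, being the square root of the smooth nonnegative function $|E|^2$. With $R=0$, inequality (3.14) reads
\[
\psi\triangle\psi+q(x)\psi^2\ge\frac1m|\nabla\psi|^2\qquad\text{(weakly) on }M,
\]
with
\[
q(x)=\frac{(m-2)\sqrt{2m}}{(m+2)\sqrt{m-1}}\,|E|+\sqrt{\frac{2m^2+4m+3}{2(m+1)(m+2)}}\,|B|\ \ge 0 .
\]
The inequality holds classically wherever $|E|\ne 0$, and the standard device of replacing $|E|$ by $\sqrt{|E|^2+\varepsilon^2}$ and letting $\varepsilon\to 0$ upgrades it to a weak inequality on all of $M$. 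Here $A=\frac1m$, so $A+\frac n2-1=\frac1m+m-1=\frac{m^2-m+1}{m}>0$; moreover $q\in C^0(M)$; and since (3.15) forces $\|E\|_{L^m(M)}<\infty$, one has $\int_{B_r}|\psi|^{n/2}\,dV_g=\int_{B_r}|E|^m\,dV_g\le\int_M|E|^m\,dV_g<\infty$, hence $=o(r^2)$. So every hypothesis of Lemma 2.2 is in force.

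I would then argue by contradiction. If $\psi$ is not identically zero, Lemma 2.2 gives
\[
\|q\|_{L^m(M)}=\|q_+\|_{L^m(M)}\ge\frac{16\,C(n)(A+n/2-1)}{n^2}=\frac{4\Lambda(M,g)(m^2-m+1)}{m^3}.
\]
On the other hand, Minkowski's inequality together with the elementary estimate
\[
\frac{(m-2)\sqrt{2m}}{(m+2)\sqrt{m-1}}\le\sqrt2\,\sqrt{\frac{2m^2+4m+3}{2(m+1)(m+2)}},
\]
valid for all $m\ge 2$ (squaring and clearing denominators reduces it to $12m^3+3m^2-13m-6\ge 0$), yields
\[
\|q\|_{L^m(M)}\le\sqrt{\frac{2m^2+4m+3}{2(m+1)(m+2)}}\Big(\sqrt2\,\|E\|_{L^m(M)}+\|B\|_{L^m(M)}\Big)<\frac{4\Lambda(M,g)(m^2-m+1)}{m^3},
\]
the last inequality being exactly (3.15). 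This contradicts the displayed lower bound, so $\psi\equiv 0$, i.e. $E\equiv 0$; consequently $R_{\alpha\overline\beta}=E_{\alpha\overline\beta}+\frac Rm g_{\alpha\overline\beta}=0$ and $M$ is a Ricci-flat K\"ahler manifold.

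The analytic substance is carried entirely by Lemma 2.2, so there is no deep obstacle; the only steps requiring care are bookkeeping ones: identifying (2.2) with the Euclidean Sobolev inequality in dimension $n=2m$, justifying the weak form of (3.14) across the zero set of $E$, and verifying the elementary inequality in $m$ above — which is precisely the assertion that the constant in (3.15) has been calibrated so that the Minkowski upper bound for $\|q\|_{L^m(M)}$ drops below the threshold $\frac{4\Lambda(M,g)(m^2-m+1)}{m^3}$ produced by Lemma 2.2.
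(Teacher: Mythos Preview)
Your proposal is correct and follows essentially the same approach as the paper: reduce (3.14) with $R=0$ to the form required by Lemma 2.2 for $\psi=|E|$, $A=\tfrac1m$, $n=2m$, check the growth condition via $\|E\|_{L^m}<\infty$, and derive a contradiction by bounding $\|q\|_{L^m}$ above using the coefficient comparison $\frac{m-2}{m+2}\sqrt{\frac{2m}{m-1}}\le\sqrt{2}\sqrt{\frac{2m^2+4m+3}{2(m+1)(m+2)}}$. Your write-up is in fact slightly more careful than the paper's in noting the Lipschitz regularity of $|E|$ and the passage to a weak inequality across the zero set.
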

\begin{proof}
 Since $R=0$, the differential inequality (3.14) becomes
\begin{equation}
|E|\triangle |E|+\left( \frac{m-2}{m+2}\sqrt{\frac{2m}{m-1}}|E|+\sqrt{\frac{%
2m^2+4m+3}{2(m+1)(m+2)}}|B|\right) |E|^2\geq \frac 1m|\nabla _{}|E||^2.
\tag{3.16}
\end{equation}
Under the assumptions that $\Lambda (M,g)>0$ and $R=0$, the following
Euclidean-type Sobolev inequality
\begin{equation*}
\Lambda (M,g)\left( \int_M|u|^{\frac{2m}{m-1}}dV_g\right) ^{\frac{m-1}m}\leq
\int_M|\nabla _{}u|^2dV_g
\end{equation*}
holds for any $u\in C_0^\infty (M)$.

We have to show that $|E|=0$. Clearly (3.15) implies that $\int_M|E|^mdV_g$
is finite, and thus
\begin{equation*}
\int_{B_r}|E|^mdV_g=o(r^2)\ \  as\ \  r\rightarrow \infty.
\end{equation*}
If $|E|$ is not identically zero, applying Lemma 2.2 to (3.16), we get
\begin{equation*}
||\frac{m-2}{m+2}\sqrt{\frac{2m}{m-1}}|E|+\sqrt{\frac{2m^2+4m+3}{2(m+1)(m+2)}%
}|B|||_{L^m(M)}\geq \frac{4\Lambda (M,g)(m^2-m+1)}{m^3}.
\end{equation*}
Note that $\sqrt{\frac{2m^2+4m+3}{2(m+1)(m+2)}}>\frac{m-2}{m+2}\sqrt{\frac
m{m-1}}$ for $m\geq 2$. Consequently
\begin{equation*}
\sqrt{\frac{2m^2+4m+3}{2(m+1)(m+2)}}\left( \sqrt{2}%
||E||_{L^m(M)}+||B||_{L^m(M)}\right) \geq \frac{4\Lambda (M,g)(m^2-m+1)}{m^3}
\end{equation*}
which contradicts to (3.15). Hence we conclude that $E=0$, that is, $(M,g,J)$
is K\"ahler-Einstein.
\end{proof}

Next we deal with the case that $R<0$. Although in this case, the Sobolev
inequality (2.2) implies the Euclidean-type Sobolev inequality (2.10) with $%
C(n)=\Lambda (M,g)$ and $n=2m$, the direct application of Lemma 2.2 to
(3.16) does not yield a nice gap result as in Theorem 3.1. Inspired by a
technique in [Ki], we establish the following result.
\begin{theorem}
 Let $(M,g,J)$ be a complete noncompact K\"ahler $m$%
-manifold ($m\geq 3$) with constant negative scalar curvature $R$ and
positive Yamabe constant $\Lambda (M,g)$. Suppose that
\begin{equation*}
\int_{B_r}|E|^2dV_g=o(r^2)\ \ as \ \ r\rightarrow \infty
\end{equation*}
 where $B_r$ denotes a geodesic ball of radius $r$
relative to some fixed point $x_0\in M$. If
\begin{equation}
\sqrt{2}||E||_{L^m(M)}+||B||_{L^m(M)}<\frac{\Lambda (M,g)(m+1)}m\sqrt{\frac{%
2(m+1)(m+2)}{2m^2+4m+3}},  \tag{3.17}
\end{equation}
then $(M,g,J)$ is K\"ahler-Einstein.
\end{theorem}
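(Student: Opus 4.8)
The plan is to adapt the integration‑by‑parts scheme behind Lemma 2.2 to the differential inequality (3.14), the new ingredient being that for $R<0$ the Yamabe--Sobolev inequality (2.2) already encodes a Poincar\'e‑type inequality capable of absorbing the zeroth‑order term $\frac{2R}{m+1}|E|^2$; this is the technique from [Ki] referred to above. Write $f=|E|$, $c_1=\frac{(m-2)\sqrt{2m}}{(m+2)\sqrt{m-1}}$, $c_2=\sqrt{\frac{2m^2+4m+3}{2(m+1)(m+2)}}$, and recall that $c_1<\sqrt2\,c_2$ (the inequality already used in the proof of Theorem 3.1). Since $R<0$, (3.14) becomes, in the weak sense (as $f$ is only locally Lipschitz),
\[
\frac1m|\nabla f|^2\le f\,\triangle f+\frac{2|R|}{m+1}f^2+c_1f^3+c_2|B|f^2 .
\]
For $r>0$ pick $\eta=\eta_r\in C_0^\infty(M)$ with $0\le\eta\le1$, $\eta\equiv1$ on $B_r$, $\operatorname{supp}\eta\subseteq B_{2r}$, $|\nabla\eta|\le 2/r$. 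Multiplying by $\eta^2$, integrating over $M$ and integrating by parts via $\int_M\eta^2 f\,\triangle f=-\int_M\eta^2|\nabla f|^2-2\int_M\eta f\,\nabla\eta\cdot\nabla f$, I obtain, with $v:=\eta f$,
\[
\Bigl(1+\frac1m\Bigr)\int_M\eta^2|\nabla f|^2+2\int_M\eta f\,\nabla\eta\cdot\nabla f-\frac{2|R|}{m+1}\int_M v^2\le c_1\int_M\eta^2 f^3+c_2\int_M\eta^2|B|f^2 .
\]

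Next I insert the Yamabe--Sobolev inequality (2.2) for $v=\eta f$, which for $R<0$ reads $\int_M|\nabla v|^2\ge\Lambda(M,g)\bigl(\int_M v^{2m/(m-1)}dV_g\bigr)^{(m-1)/m}+\frac{(m-1)|R|}{2m-1}\int_M v^2$. Fix a small $\varepsilon>0$ and put $\lambda=\frac{m+1}{m}-\varepsilon$. Using $|\nabla v|^2=\eta^2|\nabla f|^2+2\eta f\,\nabla\eta\cdot\nabla f+f^2|\nabla\eta|^2$, I express $(1+\frac1m)\int_M\eta^2|\nabla f|^2+2\int_M\eta f\,\nabla\eta\cdot\nabla f$ as $\lambda\int_M|\nabla v|^2$ plus the free sliver $\varepsilon\int_M\eta^2|\nabla f|^2$ plus boundary terms, and use the sliver (via Cauchy--Schwarz) to dominate the residual $\nabla\eta\cdot\nabla f$ cross term; this costs a constant $C(\varepsilon)$ --- finite once $\varepsilon$ is fixed, but $C(\varepsilon)\to\infty$ as $\varepsilon\to0$ --- times $\int_M f^2|\nabla\eta|^2\le\frac4{r^2}\int_{B_{2r}}|E|^2$. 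Provided $\varepsilon$ is small enough that $\lambda\ge\frac{2(2m-1)}{(m+1)(m-1)}$ --- equivalently $m^3-3m^2+m-1>0$, which holds exactly for $m\ge3$ and is the sole source of the hypothesis $m\ge3$ --- the Poincar\'e term $\frac{(m-1)|R|}{2m-1}\int_M v^2$ furnished by (2.2) absorbs $\frac{2|R|}{m+1}\int_M v^2$. Combining with the integrated inequality displayed above and letting $r\to\infty$ (with $\varepsilon$ fixed), the hypothesis $\int_{B_r}|E|^2dV_g=o(r^2)$ annihilates the cut‑off error and yields
\[
\lambda\,\Lambda(M,g)\Bigl(\int_M v^{\frac{2m}{m-1}}dV_g\Bigr)^{\frac{m-1}m}\le c_1\int_M\eta_r^2 f^3+c_2\int_M\eta_r^2|B|f^2+o(1).
\]

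Finally, (3.17) forces $\|E\|_{L^m(M)},\|B\|_{L^m(M)}<\infty$, so H\"older's inequality with exponents $\frac{m}{m-1}$ and $m$ gives $\int_M\eta^2 f^3\le\|E\|_{L^m(M)}\bigl(\int_M v^{2m/(m-1)}dV_g\bigr)^{(m-1)/m}$ and likewise $\int_M\eta^2|B|f^2\le\|B\|_{L^m(M)}\bigl(\int_M v^{2m/(m-1)}dV_g\bigr)^{(m-1)/m}$; together with $c_1<\sqrt2\,c_2$ the right‑hand side above is at most $c_2\bigl(\sqrt2\|E\|_{L^m(M)}+\|B\|_{L^m(M)}\bigr)\bigl(\int_M v^{2m/(m-1)}dV_g\bigr)^{(m-1)/m}$. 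Hence
\[
\Bigl(\lambda\,\Lambda(M,g)-c_2\bigl(\sqrt2\|E\|_{L^m(M)}+\|B\|_{L^m(M)}\bigr)\Bigr)\Bigl(\int_M\eta_r^2 f^{\frac{2m}{m-1}}dV_g\Bigr)^{\frac{m-1}m}\le o(1).
\]
But (3.17) is precisely the statement $c_2\bigl(\sqrt2\|E\|_{L^m(M)}+\|B\|_{L^m(M)}\bigr)<\frac{m+1}{m}\Lambda(M,g)$, so a sufficiently small $\varepsilon$ makes the bracket strictly positive, and letting $r\to\infty$ forces $\|f\|_{L^{2m/(m-1)}(B_\rho)}=0$ for every $\rho>0$, i.e.\ $E\equiv0$; thus $(M,g,J)$ is K\"ahler--Einstein.

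The step I expect to be the crux is the handling of the cross term $\int_M\eta f\,\nabla\eta\cdot\nabla f$: a naive bound eats into the coefficient $1+\frac1m$ of $\int_M\eta^2|\nabla f|^2$ and would leave a Sobolev‑norm coefficient strictly below the sharp value $\frac{m+1}{m}$ needed to match (3.17). The $\varepsilon$‑sliver bookkeeping above evades this because the divergence of $C(\varepsilon)$ as $\varepsilon\to0$ is innocuous --- it multiplies a cut‑off integral that is sent to $0$ first, thanks to the $o(r^2)$ growth assumption, with $\varepsilon$ taken small only afterwards. The condition $m\ge3$ enters solely to guarantee that the Poincar\'e constant $\frac{(m-1)|R|}{2m-1}$ supplied by (2.2) is large enough to swallow $\frac{2|R|}{m+1}$ while still leaving room for the full coefficient $\frac{m+1}{m}$ in front of the Sobolev norm.
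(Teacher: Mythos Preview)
Your argument is correct and follows essentially the same route as the paper's own proof: multiply (3.14) by a squared cutoff, integrate by parts, feed the resulting gradient term into the Sobolev inequality (2.2) with a small parameter reserved to absorb the cross term (at the price of a cutoff error annihilated by the $o(r^2)$ hypothesis), and observe that for $m\ge3$ the scalar-curvature contribution $\frac{(m-1)|R|}{2m-1}$ from (2.2) dominates $\frac{2|R|}{m+1}$. The paper carries two parameters $\varepsilon_1,\varepsilon_2$ (one in the integration by parts, one in expanding $|\nabla(\phi u)|^2$) where you use a single $\varepsilon$, but the mechanism and the sharp threshold $\frac{m+1}{m}\Lambda(M,g)$ are identical.
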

\begin{proof}
 Set $u=|E|$. For any test function $0\leq \phi \in C_0^\infty (M)$,
we get from (3.14) that
\begin{align*}
\int_Mu(\triangle u)\phi ^2dV_g\geq& \int_M\{\frac 1m|\nabla u|^2\phi ^2+%
\frac{2R}{m+1}u^2\phi ^2-\frac{m-2}{m+2}\sqrt{\frac{2m}{m-1}}u^3\phi ^2 \\
&-\sqrt{\frac{2m^2+4m+3}{2(m+1)(m+2)}}|B|u^2\phi ^2\}dV_g. \tag{3.18}
\end{align*}
Using integration by parts and the Schwarz inequality, we deduce
\begin{align*}
\int_Mu(\triangle u)\phi ^2dV &=-\int_M|\nabla u|^2\phi ^2dV_g-2\int_M\phi
u<\nabla u,\nabla \phi >dV_g \\
&\leq (\varepsilon _1-1)\int_M|\nabla u|^2\phi ^2dV_g+\varepsilon
_1^{-1}\int_M|\nabla \phi |^2u^2dV_g  \tag{3.19}
\end{align*}
for any $\varepsilon _1>0$. It follows from (3.18) and (3.19) that
\begin{align*}
(1+\frac 1m-\varepsilon _1)\int_M|\nabla u|^2\phi ^2dV_g
\leq& \int_M\{\varepsilon _1^{-1}|\nabla \phi |^2u^2+\phi ^2[\frac{m-2}{m+2}%
\sqrt{\frac{2m}{m-1}}u^3\\
&+\sqrt{\frac{2m^2+4m+3}{2(m+1)(m+2)}}|B|u^2-\frac{2R%
}{m+1}u^2]\}dV_g.\tag{3.20}
\end{align*}
From the Sobolev inequality (2.2) and the Schwarz inequality, we find
\begin{align*}
\Lambda (M,g)\left( \int_M(\phi u)^{\frac{2m}{m-1}}dV_g\right) ^{\frac{m-1}%
m}\leq& \int_M\{(1+\varepsilon _2)|\nabla u|^2\phi ^2+(1+\varepsilon
_2^{-1})|\nabla \phi |^2u^2 \\
&+\frac{m-1}{2m-1}R(\phi u)^2\}dV_g \tag{3.21}
\end{align*}
for any $\varepsilon _2>0$. Then (3.20) and (3.21) imply
\begin{align*}
\Lambda (M,g)\left( \int_M(\phi u)^{\frac{2m}{m-1}}dV_g\right) ^{\frac{m-1}m}
\leq& \int_M\{A_1|\nabla \phi |^2u^2+A_2R\phi ^2u^2\\
& +A_3|B|\phi ^2u^2+A_4\phi^2u^3\}dV_g \tag{3.22}
\end{align*}
where
\begin{align*}
A_1=& \frac{1+\varepsilon _2}{(1+m^{-1}-\varepsilon _1)\varepsilon _1}%
+1+\varepsilon _2^{-1}, \\
A_2=&\frac{m-1}{2m-1}-\frac{2(1+\varepsilon _2)}{(m+1)(1+m^{-1}-\varepsilon
_1)}, \\
A_3=& \frac{1+\varepsilon _2}{1+m^{-1}-\varepsilon _1}\sqrt{\frac{2m^2+4m+3}{%
2(m+1)(m+2)}}, \\
A_4=& \frac{(1+\varepsilon _2)(m-2)}{(1+m^{-1}-\varepsilon _1)(m+2)}\sqrt{%
\frac{2m}{m-1}}.
\end{align*}
Note that $A_2>0$ for $m\geq 3$ and sufficiently small $\varepsilon _1$ and $%
\varepsilon _2$. Under the assumption (3.17), we may choose sufficiently
small $\varepsilon _1$ and $\varepsilon _2$ such that
\begin{equation}
\sqrt{2}||E||_{L^m(M)}+||B||_{L^m(M)}<\frac{\Lambda
(M,g)(1+m^{-1}-2\varepsilon _1)}{1+2\varepsilon _2}\sqrt{\frac{2(m+1)(m+2)}{%
2m^2+4m+3}}.  \tag{3.23}
\end{equation}
Moreover, the sufficiently small $\varepsilon _1$ and $\varepsilon _2$ also
ensure
\begin{equation}
\{\frac{m-1}{2m-1}-\frac{2(1+\varepsilon _2)}{(m+1)(1+m^{-1}-\varepsilon _1)}%
\}R\phi ^2u^2\leq 0.  \tag{3.24}
\end{equation}
Since $\sqrt{2}A_3\geq A_4$ for $m\geq 3$, we get from (3.22) and (3.24)
that
\begin{equation}
\Lambda (M,g)\left( \int_M(\phi u)^{\frac{2m}{m-1}}dV_g\right) ^{\frac{m-1}%
m}\leq \int_M\{A_1|\nabla \phi |^2u^2+A_3[|B|+\sqrt{2}u]\phi ^2u^2\}dV_g.
\tag{3.25}
\end{equation}
The H\"older inequality gives
\begin{align*}
\int_M|B|\phi ^2u^2dV_g\leq& \left( \int_M|B|^mdV_g\right) ^{\frac 1m}\left(
\int_M(\phi u)^{\frac{2m}{m-1}}dV_g\right) ^{\frac{m-1}m} \\
\int_Mu^3\phi ^2dV_g\leq& \left( \int_M|u|^mdV_g\right) ^{\frac 1m}\left(
\int_M(\phi u)^{\frac{2m}{m-1}}dV_g\right) ^{\frac{m-1}m}. \tag{3.26}
\end{align*}
Hence we may combine (3.23), (3.25) and (3.26) to find
\begin{align*}
&\Lambda (M,g)\left( \int_M(\phi u)^{\frac{2m}{m-1}}dV_g\right) ^{\frac{m-1}m}\\
\leq& A_1\int_M|\nabla \phi |^2u^2dV_g+\frac{\Lambda (M,g)(1+\varepsilon
_2)(1+m^{-1}-2\varepsilon _1)}{(1+2\varepsilon _2)((1+m^{-1}-\varepsilon _1)}%
\left( \int_M(\phi u)^{\frac{2m}{m-1}}dV_g\right) ^{\frac{m-1}m}.
\end{align*}
Consequently
\begin{equation}
\Lambda (M,g)[1-\frac{(1+\varepsilon _2)(1+m^{-1}-2\varepsilon _1)}{%
(1+2\varepsilon _2)((1+m^{-1}-\varepsilon _1)}]\left( \int_M(\phi u)^{\frac{%
2m}{m-1}}dV_g\right) ^{\frac{m-1}m}\leq A_1\int_M|\nabla \phi |^2u^2dV_g.
\tag{3.27}
\end{equation}
Now we let $\phi =\phi _r$ be a family of cut-off functions satisfying
\begin{equation*}
\phi _r\equiv 1\text{ on }B_r;\quad \phi _r\equiv 0\text{ off }B_{2r};\quad
|\nabla \phi _r|\leq \frac 2r\text{ on }B_{2r}-B_r.
\end{equation*}
Then (3.27) becomes
\begin{equation}
\Lambda (M,g)[1-\frac{(1+\varepsilon _2)(1+m^{-1}-2\varepsilon _1)}{%
(1+2\varepsilon _2)((1+m^{-1}-\varepsilon _1)}]\left( \int_{B_r}u^{\frac{2m}{%
m-1}}dV_g\right) ^{\frac{m-1}m}\leq \frac{4A_1}{r^2}\int_{B_{2r}}u^2dV_g.
\tag{3.28}
\end{equation}
Letting $r\rightarrow \infty $ in (3.28), we get
\begin{equation*}
\int_Mu^{\frac{2m}{m-1}}dV_g=0,
\end{equation*}
that is, $u\equiv 0$. Hence $(M,g,J)$ is K\"ahler-Einstein.
\end{proof}

Now we consider the case that $R>0$. The Bonnet-Myers theorem in Riemannian
geometry implies that any complete Einstein manifold with positive scalar
curvature must be compact. Following in this section are two rigidity
results about compact K\"ahler manifolds with positive scalar curvature. Recall that
if $(M,g)$ is compact, the positivity of the scalar curvature guarantees the
positivity of the Yamabe constant $\Lambda (M,g)$.
\begin{theorem}
Let $(M,g,J)$ be a compact K\"ahler $m$-manifold ($%
m\geq 2$) with constant positive scalar curvature $R$. If
\begin{equation}
\sqrt{2}||E||_{L^m(M)}+||B||_{L^m(M)}<\Lambda (M,g)P(m)\sqrt{\frac{%
2(m+1)(m+2)}{2m^2+4m+3}}  \tag{3.29}
\end{equation}
where $P(2)=3/2$ and $P(m)=\frac{2(2m-1)}{m^2-1}$ for $m\geq 3$, then $%
(M,g,J)$ is K\"ahler-Einstein.
\end{theorem}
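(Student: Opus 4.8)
The plan is to run a weighted integral (Moser-type) argument directly on the closed manifold, using the Weitzenb\"ock inequality (3.14) together with the Sobolev inequality (2.2), but testing against a suitable power of $|E|$ rather than against $|E|$ itself. Write $u=|E|$, put $a=\frac{(m-2)\sqrt{2m}}{(m+2)\sqrt{m-1}}$ and $b=\sqrt{\frac{2m^2+4m+3}{2(m+1)(m+2)}}$, and use $u\triangle u=\tfrac12\triangle u^2-|\nabla u|^2$; then (3.14) is equivalent to
\[
\tfrac12\triangle|E|^2\ \ge\ \tfrac{m+1}{m}\,|\nabla|E||^2+\tfrac{2R}{m+1}|E|^2-a|E|^3-b|B|\,|E|^2 .
\]
The idea is that for $R>0$ the Einstein term $\frac{2R}{m+1}|E|^2$ is favourable, but applying (2.2) reintroduces the unfavourable term $\frac{m-1}{2m-1}R$; these two scalar-curvature contributions balance if one simply tests with $u$ only when $m=2$ (giving the value $P(2)=\frac{m+1}m=\frac32$). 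To handle $m\ge3$ I would test the displayed inequality against $|E|^{2s}$ with a free parameter $s\ge0$.

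Multiplying by the nonnegative function $|E|^{2s}$, integrating over the closed manifold $M$, and integrating by parts (legitimate since $|E|^2$ is smooth) converts the Laplacian term into $-2s\int_M|E|^{2s}|\nabla|E||^2$, so with $v:=|E|^{s+1}$, using $|\nabla v|^2=(s+1)^2|E|^{2s}|\nabla|E||^2$, $|E|^{2s+2}=v^2$ and $|E|^{2s+3}=|E|v^2$, one gets
\[
D_s\int_M|\nabla v|^2\,dV_g+\tfrac{2R}{m+1}\int_M v^2\,dV_g\ \le\ a\int_M|E|v^2\,dV_g+b\int_M|B|v^2\,dV_g,\qquad D_s:=\frac{(m+1)/m+2s}{(s+1)^2}.
\]
Now apply (2.2) to $v$ (valid by density, since $v$ is Lipschitz on the compact $M$) to replace $\int_M|\nabla v|^2$ by $\Lambda(M,g)\|v\|_{L^{2m/(m-1)}(M)}^2-\frac{m-1}{2m-1}R\int_M v^2$; the net coefficient of $R\int_M v^2$ on the left then becomes $\frac{2}{m+1}-D_s\frac{m-1}{2m-1}$. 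Since $D_s$ is strictly decreasing and continuous on $[0,\infty)$ with $D_0=\frac{m+1}{m}$ and $D_s\to0$, and since $\frac{m+1}{m}\ge P(m)$ for all $m\ge2$ (equality exactly when $m=2$), I would choose $s$ so that $D_s=P(m)$: namely $s=0$ when $m=2$, and the unique $s=s^{*}>0$ with $D_{s^{*}}=\frac{2(2m-1)}{m^2-1}$ when $m\ge3$. With this choice $\frac{2}{m+1}-D_s\frac{m-1}{2m-1}\ge0$ (it equals $\frac16$ for $m=2$ and $0$ for $m\ge3$), so, as $R>0$, that term may be dropped, leaving $P(m)\,\Lambda(M,g)\,\|v\|_{L^{2m/(m-1)}(M)}^2\le a\int_M|E|v^2+b\int_M|B|v^2$.

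Applying H\"older's inequality to each term on the right, $\int_M|E|v^2\le\|E\|_{L^m(M)}\|v\|_{L^{2m/(m-1)}(M)}^2$ and likewise for $|B|$, and cancelling $\|v\|_{L^{2m/(m-1)}(M)}^2$ (which is nonzero unless $E\equiv0$), gives $P(m)\,\Lambda(M,g)\le a\|E\|_{L^m(M)}+b\|B\|_{L^m(M)}$. Finally, as already observed in the proof of Theorem 3.1 one has $a\le\sqrt2\,b$ for $m\ge2$, and $b^{-1}=\sqrt{\frac{2(m+1)(m+2)}{2m^2+4m+3}}$; hence $\sqrt2\|E\|_{L^m(M)}+\|B\|_{L^m(M)}\ge\Lambda(M,g)P(m)\sqrt{\frac{2(m+1)(m+2)}{2m^2+4m+3}}$, contradicting (3.29). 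Therefore $E\equiv0$, i.e. $(M,g,J)$ is K\"ahler-Einstein. The only genuinely delicate point is the selection of the exponent $s$: one must recognize that $P(m)$ is exactly the largest value attained by $D_s$ over $s\ge0$ that still keeps $\frac{2}{m+1}-D_s\frac{m-1}{2m-1}\ge0$, so that the two scalar-curvature contributions precisely cancel; the remaining steps are a routine integration by parts and H\"older estimate.
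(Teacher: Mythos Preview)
Your argument is correct and rests on the same ingredients as the paper's proof (the Weitzenb\"ock--Kato inequality (3.14), the Sobolev inequality (2.2), and H\"older), but the execution differs. The paper simply integrates (3.14) over $M$---that is, it takes $s=0$ in your notation---applies (2.2) to $|E|$ itself, and arrives at
\[
\Bigl(1-\tfrac{m}{(m+1)\Lambda}\,\|q\|_{L^m}\Bigr)\!\int_M|\nabla|E||^2
+\Bigl(\tfrac{2m}{(m+1)^2}-\tfrac{m(m-1)}{(m+1)(2m-1)\Lambda}\,\|q\|_{L^m}\Bigr)R\!\int_M|E|^2\le0,
\]
with $q=a|E|+b|B|$; asking both coefficients to be nonnegative yields the two thresholds $\|q\|_{L^m}\le\Lambda\cdot\tfrac{m+1}{m}$ and $\|q\|_{L^m}\le\Lambda\cdot\tfrac{2(2m-1)}{m^2-1}$, whose minimum is exactly $\Lambda P(m)$. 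Your weighted test with $|E|^{2s}$ and the choice of $s$ are tuned so that, after substituting Sobolev, the $R$-term carries a nonnegative coefficient and can be dropped, leaving the single threshold $D_s\Lambda=P(m)\Lambda$; in effect you recover the same minimum via a one-parameter optimisation rather than by comparing two explicit bounds. The paper's route is marginally shorter since it avoids the auxiliary exponent, while yours makes transparent \emph{why} $P(m)$ splits into two cases (it is the largest $D_s$ on $[0,\infty)$ compatible with the sign constraint $\tfrac{2}{m+1}-D_s\tfrac{m-1}{2m-1}\ge0$). One small technical remark: for $m\ge3$ your $s^*$ satisfies $0<s^*<\tfrac12$, so $(|E|^2)^{s^*}$ need not be $C^1$ at zeros of $E$, and ``legitimate since $|E|^2$ is smooth'' is not quite enough to justify the integration by parts; the standard remedy is to test with $(|E|^2+\epsilon)^{s^*}$ and let $\epsilon\downarrow0$.
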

\begin{proof}
By integrating (3.14) and using the H\"older inequality, we have
\begin{align*}
\int_M|\nabla |E||^2dV_g \leq& \frac 1{1+m^{-1}}||q||_{L^m(M)}\left( \int_M|E|^{\frac{2m}{m-1}%
}dV_g\right) ^{\frac{m-1}m}\\
&-\frac{2R}{(m+1)(1+m^{-1})}\int_M|E|^2dV_g \tag{3.30}
\end{align*}
where
\begin{equation*}
q(x)=\frac{m-2}{m+2}\sqrt{\frac{2m}{m-1}}|E|+\sqrt{\frac{2m^2+4m+3}{
2(m+1)(m+2)}}|B|.
\end{equation*}
Since the condition $R>0$ implies $\Lambda (M,g)>0$, (2.2) gives
\begin{equation}
\Lambda (M,g)\left( \int_M|E|^{\frac{2m}{m-1}}dV_g\right) ^{\frac{m-1}m}\leq
\int_M|\nabla |E||^2dV_g+\frac{(m-1)R}{2m-1}\int_M|E|^2dV_g.  \tag{3.31}
\end{equation}
Substituting (3.31) into (3.30) leads to
\begin{align*}
\int_M|\nabla |E||^2dV_g\leq& \frac 1{\Lambda
(M,g)(1+m^{-1})}||q||_{L^m(M)}\int_M[|\nabla |E||^2+\frac{(m-1)R}{2m-1}%
|E|^2]dV_g \\
&-\frac{2R}{(m+1)(1+m^{-1})}\int_M|E|^2dV_g.
\end{align*}
Consequently
\begin{align*}
\left( 1-\frac 1{\Lambda (M,g)(1+m^{-1})}||q||_{L^m(M)}\right)
\int_M|\nabla |E||^2dV_g+& \\
\left( \frac 2{(m+1)(1+m^{-1})}-\frac{(m-1)}{\Lambda (M,g)(1+m^{-1})(2m-1)}%
||q||_{L^m(M)}\right) R\int_M|E|^2dV_g & \leq 0. \tag{3.32}
\end{align*}
Note again that
\begin{equation*}
||q||_{L^m(M)}\leq \sqrt{\frac{2m^2+4m+3}{2(m+1)(m+2)}}\left( \sqrt{2}%
|E|_{L^m(M)}+||B||_{L^m(M)}\right)
\end{equation*}
for $m\geq 2$. Clearly the condition (3.29) guarantees that the two terms on
the left hand side of the inequality (3.32) are nonnegative. Thus we may
conclude that $E=0$.
\end{proof}
\begin{theorem}
Let $(M,g,J)$ be a compact K\"ahler $m$-manifold ($%
m\geq 2$) with constant positive scalar curvature $R$. If
\begin{equation*}
\sqrt{2}|E|+|B|\leq \frac 2{m+1}\sqrt{\frac{2(m+1)(m+2)}{2m^2+4m+3}}R,
\end{equation*}
then $(M,g,J)$ is K\"ahler-Einstein.
\end{theorem}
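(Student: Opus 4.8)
The plan is to feed the pinching hypothesis directly into the pointwise Weitzenb\"ock inequality (3.13)--(3.14) and then invoke the maximum principle; unlike Theorems 3.1--3.4, no Sobolev inequality or integral estimate is needed, so the argument is short. First I would rewrite (3.14): combining (3.13) with the refined Kato inequality of Lemma 3.1 exactly as in the derivation of (3.14), at every point where $|E|\neq 0$ one has
\begin{equation*}
|E|\triangle|E|\geq \frac{1}{m}|\nabla|E||^2+|E|^2 Q,\qquad Q:=\frac{2R}{m+1}-\frac{(m-2)\sqrt{2m}}{(m+2)\sqrt{m-1}}|E|-\sqrt{\frac{2m^2+4m+3}{2(m+1)(m+2)}}|B|,
\end{equation*}
and therefore $\frac{1}{2}\triangle|E|^2=|E|\triangle|E|+|\nabla|E||^2\geq \frac{m+1}{m}|\nabla|E||^2+|E|^2 Q$.

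Next I would estimate $Q$ from below using the hypothesis. As already observed in the proof of Theorem 3.1, for every $m\geq 2$ one has the strict inequality $\frac{(m-2)\sqrt{2m}}{(m+2)\sqrt{m-1}}<\sqrt{2}\sqrt{\frac{2m^2+4m+3}{2(m+1)(m+2)}}$ (for $m=2$ the left side is $0$). Hence, at any point with $|E|>0$,
\begin{equation*}
Q>\frac{2R}{m+1}-\sqrt{\frac{2m^2+4m+3}{2(m+1)(m+2)}}\Bigl(\sqrt{2}|E|+|B|\Bigr)\geq \frac{2R}{m+1}-\sqrt{\frac{2m^2+4m+3}{2(m+1)(m+2)}}\cdot\frac{2}{m+1}\sqrt{\frac{2(m+1)(m+2)}{2m^2+4m+3}}\,R=0,
\end{equation*}
where the middle step is the hypothesis and the last equality is just the cancellation of the two square roots (which is precisely why the pinching constant was written that way). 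Consequently $\frac{1}{2}\triangle|E|^2>0$ at every point where $|E|\neq 0$, while $\triangle|E|^2\geq 0$ everywhere, since at a zero of the smooth nonnegative function $|E|^2$ the Laplacian is automatically nonnegative.

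Finally I would apply the maximum principle on the compact manifold $M$. Let $p$ be a point where $|E|^2$ attains its maximum. If $|E|(p)>0$, then $|E|^2$ is smooth near $p$ and $\triangle|E|^2(p)\leq 0$, contradicting the strict inequality $\triangle|E|^2>0$ just established; hence $|E|(p)=0$, so $\max_M|E|^2=0$ and $E\equiv 0$. Equivalently, integrating $\triangle|E|^2\geq 0$ over the closed manifold $M$ forces $\triangle|E|^2\equiv 0$, which together with the strict positivity off $\{E=0\}$ forces $|E|\equiv 0$. In either case $(M,g,J)$ is K\"ahler--Einstein.

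Since the Weitzenb\"ock formula (3.1)/(3.13), the refined Kato inequality (Lemma 3.1) and the bound on the contracted Bochner--Ricci term (Lemma 3.2) are all already in hand, there is no serious obstacle; the only thing to watch is the elementary constant inequality above, which is exactly what makes the cubic term $\frac{(m-2)\sqrt{2m}}{(m+2)\sqrt{m-1}}|E|^3$ and the term $\sqrt{\frac{2m^2+4m+3}{2(m+1)(m+2)}}|B||E|^2$ together strictly dominated by $\frac{2R}{m+1}|E|^2$ off the zero set of $E$, so that the maximum-principle step goes through with a strict sign.
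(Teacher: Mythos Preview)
Your proof is correct and follows essentially the same approach as the paper: both feed the hypothesis and the strict constant inequality $\sqrt{\frac{2m^2+4m+3}{2(m+1)(m+2)}}>\frac{m-2}{m+2}\sqrt{\frac{m}{m-1}}$ into the Weitzenb\"ock inequality (3.13) to force $\frac12\triangle|E|^2\geq 0$ with strict inequality off $\{E=0\}$, and then conclude $E\equiv 0$ by integrating (equivalently, by the maximum principle). Your detour through (3.14) and the Kato inequality is harmless but unnecessary---the paper works directly from (3.13) and simply drops the nonnegative $|\nabla E|^2$ term.
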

\begin{proof}
 From (3.13), we have
\begin{align*}
\frac 12\triangle |E|^2\geq& \{\frac{2R}{m+1}-\frac{m-2}{m+2}\sqrt{\frac{2m}{%
m-1}}|E|-\sqrt{\frac{2m^2+4m+3}{2(m+1)(m+2)}}|B|\}|E|^2 \\
=&\{\frac{2R}{m+1}-\sqrt{\frac{2m^2+4m+3}{2(m+1)(m+2)}}(\sqrt{2}%
|E|+|B|)\}|E|^2\\
&+\sqrt 2 \left( \sqrt{\frac{2m^2+4m+3}{2(m+1)(m+2)}}-\frac{m-2}{m+2}%
\sqrt{\frac {m}{m-1}}\right) |E|^3. \tag{3.33}
\end{align*}
Since $\sqrt{\frac{2m^2+4m+3}{2(m+1)(m+2)}}>\frac{m-2}{m+2}\sqrt{\frac {m}{m-1}%
}$, the integration of (3.33) leads to $E=0$, that is, $(M,g,J)$ is
K\"ahler-Einstein.
\end{proof}

\section{ Rigidity of complex space forms}

In this section, we establish some rigidity results characterizing complex
space forms among complete K\"ahler-Einstein manifolds and complete K\"ahler
manifolds with constant scalar curvature respectively.

Suppose $(M,g,J)$ is a K\"ahler-Einstein manifold of dimension $m$ ($m\geq 2$%
). The Einsteinian condition implies directly that the scalar curvature $R$
is constant and (2.6) becomes
\begin{equation}
B_{\alpha \overline{\beta }\gamma \overline{\delta }}=R_{\alpha \overline{%
\beta }\gamma \overline{\delta }}+\frac R{m(m+1)}(g_{\alpha \overline{\beta }%
}g_{\gamma \overline{\delta }}+g_{\gamma \overline{\beta }}g_{\alpha
\overline{\delta }}).  \tag{4.1}
\end{equation}
In this circumstance the Bochner tensor measures the deviation of a
K\"ahler-Einstein metric from the metric with constant holomorphic
sectional curvature.

We want next to derive the Weitzenb\"ock formula for the Bochner tensor $B$.
Note that $B$ is regarded as a real tensor in $\Lambda ^{1,1}(M)\otimes
\Lambda ^{1,1}(M)$. As in \S 3, we take a normal complex coordinate system
at a given point. So
$$|B|^2=4|B_{\alpha \overline{\beta }\gamma \overline{%
\delta }}|^2.$$
 Using (2.4), (2.7), (2.10) and (4.1), a direct computation
gives
\begin{align*}
\frac 12\triangle |B|^2 &=2\left( \nabla _\lambda \nabla _{\overline{\lambda }%
}|B_{\alpha \overline{\beta }\gamma \overline{\delta }}|^2+\nabla _{%
\overline{\lambda }}\nabla _\lambda |B_{\alpha \overline{\beta }\gamma
\overline{\delta }}|^2\right) \\
&= 8B_{\alpha \overline{\beta }\gamma \overline{\delta },\lambda }B_{\overline{%
\alpha }\beta \overline{\gamma }\delta ,\overline{\lambda }}+4B_{\alpha
\overline{\beta }\gamma \overline{\delta },\lambda \overline{\lambda }}B_{%
\overline{\alpha }\beta \overline{\gamma }\delta }+4B_{\alpha \overline{%
\beta }\gamma \overline{\delta },\overline{\lambda }\lambda }B_{\overline{%
\alpha }\beta \overline{\gamma }\delta } \\
&= 8B_{\alpha \overline{\beta }\gamma \overline{\delta },\lambda }B_{\overline{%
\alpha }\beta \overline{\gamma }\delta ,\overline{\lambda }}+4B_{\alpha
\overline{\beta }\lambda \overline{\delta },\gamma \overline{\lambda }}B_{%
\overline{\alpha }\beta \overline{\gamma }\delta }+4B_{\alpha \overline{%
\beta }\gamma \overline{\lambda },\overline{\delta }\lambda }B_{\overline{%
\alpha }\beta \overline{\gamma }\delta } \tag{4.2}\\
&= 8|B_{\alpha \overline{\beta }\gamma \overline{\delta },\lambda }|^2+8B_{%
\overline{\beta }v\lambda \overline{\delta }}B_{\overline{v}\alpha \gamma
\overline{\lambda }}B_{\overline{\alpha }\beta \delta \overline{\gamma }%
}-16B_{\alpha \overline{\delta }\overline{v}\lambda }B_{v\overline{\lambda }%
\overline{\beta }\gamma }B_{\beta \overline{\gamma }\overline{\alpha }\delta
}+\frac{8R}m|B_{\alpha \overline{\beta }\gamma \overline{\delta }}|^2.
\end{align*}
Let us introduce two $m^2\times m^2$ Hermitian matrices $H$ and $K$ as
follows:
\begin{align*}
H=&(H_{ab\overline{c}\overline{d}})=(B_{\overline{c}ab\overline{d}}), \\
K=&(K_{a\overline{b}\overline{c\overline{d}}})=(B_{a\overline{b}\overline{c}%
d}).
\end{align*}
Then (4.2) becomes
\begin{equation}
\frac 12\triangle |B|^2=|\nabla _{}B|^2+8tr(H^3)-16tr(K^3)+\frac{2R}m|B|^2
\tag{4.3}
\end{equation}
where $|\nabla _{}B|^2=8|B_{\alpha \overline{\beta }\gamma \overline{\delta }%
,\lambda }|^2$. In view of (2.4) and (2.5), we see
\begin{align*}
tr(H)=&tr(K)=0, \\
tr(H^2)=&tr(K^2)=\frac 14|B|^2.
\end{align*}
Consequently Lemma 2.1 yields
\begin{equation}
|tr(H^3)|\leq \frac{m^2-2}{8\sqrt{m^2(m^2-1)}}|B|^3, \tag{4.4}
\end{equation}
and
\begin{equation}
|tr(K^3)|\leq \frac{m^2-2}{8\sqrt{m^2(m^2-1)}}|B|^3. \tag{4.5}
\end{equation}
From (4.3), (4.4) and (4.5), we deduce
\begin{equation}
\frac 12\triangle |B|^2\geq |\nabla _{}B|^2-\frac{3(m^2-2)}{\sqrt{m^2(m^2-1)}%
}|B|^3+\frac{2R}m|B|^2.  \tag{4.6}
\end{equation}
In order to estimate the first term on the right hand side of (4.6), we need
the following

\begin{lemma}([BKN]) Let $T_1$ and $T_2$ be tensors having the same
symmetries as the curvature tensor and the covariant derivative of the
curvature tensor of an Einstein metric on $n$-manifold respectively. Then
there exists $\delta (n)$ such that
\begin{equation*}
(1+\delta (n))|\langle T_1,T_2\rangle |^2\leq |T_1|^2|T_2|^2\text{, }
\end{equation*}
where $\langle T_1,T_2\rangle $ is a $1$-form defined by $\langle
T_1,T_2\rangle (X)=\langle T_1,T_2(X)\rangle $ for a tangent $X$. Moreover,
if $g$ is K\"ahler, we can take $\delta (n)=\frac 4{n+2}=\frac 2{m+1}$,
where $n=2m$.
\end{lemma}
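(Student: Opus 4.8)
The plan is to prove the lemma by reducing it to a pointwise problem in linear algebra and analyzing how the second Bianchi identity destroys the equality case of the Cauchy--Schwarz inequality. Fix a point $p$ and orthonormal frames, and for a tangent vector $X$ let $T_2(X)$ denote the algebraic curvature tensor obtained by inserting $X$ into the differentiation slot of $T_2$. Slotwise Cauchy--Schwarz gives $|\langle T_1,T_2\rangle(X)|^2\le |T_1|^2|T_2(X)|^2$, and summing over an orthonormal basis yields the trivial bound $|\langle T_1,T_2\rangle|^2\le |T_1|^2|T_2|^2$, i.e. the assertion with $\delta=0$. So everything comes down to a uniform improvement of this bound on the subspace $S\subset T^*M\otimes\mathcal R$ (here $\mathcal R$ is the space of algebraic curvature tensors) cut out by the symmetries of the covariant derivative of the curvature of an Einstein metric.

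The structural point is that $T_2\in S$ obeys the second Bianchi identity; in the Kähler case this is exactly (2.7), i.e. $T_{2,\alpha\overline\beta\gamma\overline\delta|\lambda}$ is symmetric in the two holomorphic indices $\gamma,\lambda$, and conjugately $T_{2,\alpha\overline\beta\gamma\overline\delta|\overline\lambda}$ is symmetric in $\overline\delta,\overline\lambda$. Equality in Cauchy--Schwarz would force $T_2(X)=c(X)T_1$ for all $X$, i.e. $T_2=\omega\otimes T_1$ for a $1$-form $\omega$; substituting this into the Bianchi identity and contracting against $\omega$ and against the metric, together with the fact that $T_1$ has the trace structure of the curvature of an Einstein metric (for the intended application $T_1=B$ is totally trace-free), forces $T_1=0$. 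Since the set of unit $T_1\in\mathcal R$ and unit $T_2\in S$ is compact, the continuous function $|\langle T_1,T_2\rangle|^2$ attains a maximum strictly less than $1$; writing it as $(1+\delta(n))^{-1}$ proves the general statement.

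For the sharp Kähler value $\delta(n)=\frac4{n+2}=\frac2{m+1}$, decompose $T_2(\partial_\lambda)=v_\lambda T_1+U(\partial_\lambda)$ with $U(\partial_\lambda)\perp T_1$ for every $\lambda$. Then $|\langle T_1,T_2\rangle|^2=|v|^2|T_1|^4$ and $|T_2|^2=|v|^2|T_1|^2+|U|^2$, so (for $v\ne 0$) the lemma is equivalent to $|U|^2\ge\frac2{m+1}|v|^2|T_1|^2$. The mechanism: $T_2\in S$ is killed by the orthogonal projection that antisymmetrizes the third holomorphic slot against the differentiation slot, so the antisymmetric part of $U$ must cancel that of $v\otimes T_1$, which already forces $|U|>0$ once $v\ne 0$. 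Quantitatively one feeds in all the defining relations of $S$ --- both halves of (2.7) and the contracted second Bianchi identity (which, via the Einstein condition, makes $T_2$ divergence-free in one slot and identifies it with the covariant derivative of the Bochner part alone) --- and expands the resulting norm identities using the curvature symmetries (2.4) and the total trace-freeness (2.5). The correction terms are Hermitian quadratic forms in $v$ built from $T_1$, of precisely the type of the tensors $V^{Ric}$, $Z$ handled in Lemma 3.2; estimating them by Lemma 2.1-type arguments collapses the coefficient to $\frac2{m+1}$, with equality realized by an explicit extremal tensor.

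The main obstacle is this last computation. A naive argument using only one Bianchi symmetry is genuinely insufficient: the resulting coefficient is already too small for small $m$ (e.g. $m=2$) and degrades further, so the Kähler--Einstein hypothesis must be used in full --- parallelism of the Ricci tensor, the contracted second Bianchi identity, and both conjugate halves of (2.7). Organizing these constraints so that all the quadratic-in-$v$ correction terms reassemble into the single clean factor $\frac2{m+1}$ --- equivalently, identifying the $U(m)$-irreducible decomposition of $S$ and the extremal direction in it relative to the line $\mathbb C\,T_1$ --- is the delicate, non-routine step; the reductions above are soft, and the final estimate is bookkeeping once the right combination of identities is in hand.
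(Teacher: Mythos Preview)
The paper does not prove Lemma 4.1 at all; it is quoted verbatim from [BKN] (Bando--Kasue--Nakajima, \emph{Invent.\ Math.}\ 97 (1989)) and used as a black box. So there is no in-paper proof to compare your proposal against.

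As for the proposal on its own merits: the existence of \emph{some} $\delta(n)>0$ via the compactness argument is fine, and the mechanism you identify (Cauchy--Schwarz equality forces $T_2=\omega\otimes T_1$, which the second Bianchi identity then kills) is the right heuristic. But you have not actually proved the sharp K\"ahler value $\delta=\frac{2}{m+1}$. Your last two paragraphs openly flag the quantitative step as ``the main obstacle'' and ``the delicate, non-routine step,'' and then describe it only schematically: ``feed in all the defining relations of $S$,'' ``estimating them by Lemma 2.1-type arguments collapses the coefficient to $\frac{2}{m+1}$.'' No identity is written down, no inequality is checked, and no extremal tensor is exhibited. In particular, the appeal to Lemma 2.1 (Okumura's inequality for traceless real diagonal matrices) is not obviously applicable here: the correction terms you allude to are Hermitian forms in the vector $v$ built from a four-index tensor $T_1$, not sums of cubes of eigenvalues of a traceless symmetric matrix, so the analogy with the $V^{Ric}$, $Z$ computation in Lemma 3.2 does not by itself produce the claimed constant.

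If you want a self-contained argument, the cleanest route is the one in [BKN]: work directly with the $U(m)$-decomposition of the space $S$ of tensors with the symmetries of $\nabla R$ for a K\"ahler--Einstein metric, and compute the operator norm of the contraction map $T_2\mapsto\langle T_1,T_2(\cdot)\rangle$ on each irreducible summand. That is a finite representation-theoretic calculation and is where the constant $\frac{m+3}{m+1}$ actually comes from. As written, your proposal is a correct outline for existence but leaves the stated constant unproved.
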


By applying Lemma 4.1 to $T_1=B$ and $T_2=\nabla _{}B$, we find
\begin{equation*}
\frac 14|\nabla _{}|B|^2|^2=|\langle B,\nabla _{}B\rangle |^2
\leq \frac{m+1}{m+3}|B|^2|\nabla _{}B|^2.
\end{equation*}
Note also that $|\nabla _{}|B|^2|^2=4|B|^2|\nabla _{}|B||^2$. Consequently
\begin{equation}
|\nabla _{}B|^2\geq \frac{m+3}{m+1}|\nabla _{}|B||^2  \tag{4.7}.
\end{equation}
Hence (4.6) and (4,7) imply
\begin{equation}
|B|\triangle |B|\geq \frac 2{m+1}|\nabla _{}|B||^2-\frac{3(m^2-2)}{\sqrt{%
m^2(m^2-1)}}|B|^3+\frac{2R}m|B|^2.  \tag{4.8}
\end{equation}

First, we consider the case that $R=0$. As a result of Lemma 2.2, we have

\begin{theorem}
Let $(M,g,J)$ be a complete noncompact
K\"ahler-Einstein $m$- manifold ($m\geq 2$) with $R=0$ and $\Lambda (M,g)>0$%
. If
\begin{equation}
||B||_{L^m(M)}<\frac{4\Lambda (M,g)(m^2+1)\sqrt{m^2-1}}{3m(m+1)(m^2-2)},
\tag{4.9}
\end{equation}
then $(M,g,J)$ is of constant holomorphic sectional curvature $0$.
Furthermore, if $M$ is simply connected, then $(M,g,J)$ is biholomorphically
isometric to the complex Euclidean space $C^m$.
\end{theorem}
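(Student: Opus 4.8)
The plan is to run the same argument as in Theorem 3.1, but applied to the differential inequality (4.8) for $|B|$ instead of (3.16) for $|E|$. First I would rewrite (4.8) in the form required by Lemma 2.2: since $R=0$, (4.8) becomes
\begin{equation*}
|B|\triangle |B|+\frac{3(m^2-2)}{\sqrt{m^2(m^2-1)}}|B|^3\geq \frac 2{m+1}|\nabla _{}|B||^2,
\end{equation*}
so that $\psi =|B|$ satisfies $\psi\triangle\psi+q(x)\psi^2\geq A|\nabla\psi|^2$ weakly on $M$ with $A=\frac 2{m+1}$ and $q(x)=\frac{3(m^2-2)}{\sqrt{m^2(m^2-1)}}|B|$. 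One checks $A+\frac n2-1=\frac 2{m+1}+m-1>0$ for $m\geq 2$ (here $n=2m$).

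Next I would verify the hypotheses of Lemma 2.2. The Euclidean-type Sobolev inequality (2.11) holds with $C(n)=\Lambda(M,g)$ and $n=2m$, because $R=0$ and $\Lambda(M,g)>0$ reduce (2.2) to exactly that inequality. The growth condition $\int_{B_r}|\psi|^{n/2}dV_g=\int_{B_r}|B|^m dV_g=o(r^2)$ follows from (4.9), which in particular forces $\int_M|B|^m dV_g<\infty$. So if $|B|$ is not identically zero, Lemma 2.2 gives
\begin{equation*}
\|q_{+}(x)\|_{L^m(M)}\geq \frac{16\,\Lambda(M,g)(A+m-1)}{(2m)^2}=\frac{16\,\Lambda(M,g)\bigl(\tfrac{2}{m+1}+m-1\bigr)}{4m^2}=\frac{4\,\Lambda(M,g)(m^2+1)}{m^2(m+1)}.
\end{equation*}
On the other hand $\|q_{+}\|_{L^m(M)}=\frac{3(m^2-2)}{\sqrt{m^2(m^2-1)}}\|B\|_{L^m(M)}=\frac{3(m^2-2)}{m\sqrt{m^2-1}}\|B\|_{L^m(M)}$, and combining these two bounds yields
\begin{equation*}
\|B\|_{L^m(M)}\geq \frac{4\,\Lambda(M,g)(m^2+1)}{m^2(m+1)}\cdot\frac{m\sqrt{m^2-1}}{3(m^2-2)}=\frac{4\,\Lambda(M,g)(m^2+1)\sqrt{m^2-1}}{3m(m+1)(m^2-2)},
\end{equation*}
which contradicts (4.9). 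Hence $B\equiv 0$, so by (4.1) the curvature tensor equals that of a space of constant holomorphic sectional curvature $R/(m(m+1))=0$, i.e. $(M,g,J)$ is flat.

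For the last sentence, once $B\equiv 0$ and $R=0$ we have a complete flat K\"ahler manifold; if in addition $M$ is simply connected, the Cartan--Hadamard/developing-map argument (a complete simply connected flat manifold is isometric to Euclidean space, and the flat K\"ahler structure integrates to the standard complex structure) identifies $(M,g,J)$ biholomorphically isometrically with $\mathbb{C}^m$. I expect the only mildly delicate point to be the bookkeeping in the constant chase—making sure $A+\frac n2-1$ is computed with $n=2m$ and that the algebra collapses exactly to the constant in (4.9); the case $m=2$ (where $m^2-2=2\neq 0$, so no degeneracy) and $m\geq 3$ are handled uniformly. Everything else is a direct transcription of the proof of Theorem 3.1 with $|E|$ replaced by $|B|$ and the structural constants coming from (4.6)--(4.8) in place of (3.13)--(3.16).
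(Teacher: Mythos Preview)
Your proposal is correct and follows essentially the same approach as the paper's own proof: rewrite (4.8) with $R=0$, verify the Euclidean-type Sobolev inequality from (2.2), apply Lemma 2.2 with $\psi=|B|$, $A=\tfrac{2}{m+1}$, $n=2m$, and then chase constants to reach a contradiction with (4.9). The paper is terser about the constant computation and cites Theorem 7.9 of [KN] for the final identification with $\mathbb{C}^m$, but the argument is identical.
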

\begin{proof}
Since $R=0$, the Sobolev inequality (2.2) provides an Euclidean-type
Sobolev inequality with Sobolev constant $C(2m)=\Lambda (M,g)$, and (4.8)
becomes
\begin{equation}
|B|\triangle |B|+\left( \frac{3(m^2-2)}{\sqrt{m^2(m^2-1)}}|B|\right)
|B|^2\geq \frac 2{m+1}|\nabla _{}|B||^2.  \tag{4.10}
\end{equation}
The assumption (4.9) implies that
\begin{equation*}
\int_{B_r}|B|^mdV_g=o(r^2)\text{\quad as }r\rightarrow \infty .
\end{equation*}
Thus, if $|B|$ is not identically zero, we get from Lemma 2.2 and (4.10)
that
\begin{equation*}
||B||_{L^m(M)}\geq \frac{4\Lambda (M,g)(m^2+1)\sqrt{m^2-1}}{3m(m+1)(m^2-2)}
\end{equation*}
which contradicts to (4.9). Thus $B=0$ and therefore (4.1) yields that $%
R_{\alpha \overline{\beta }\gamma \overline{\delta }}=0$. This shows that $%
(M,g,J)$ is of constant holomorphic sectional curvature $0$. Consequently,
if $M$ is simply connected, then $(M,g,J)$ is biholomorphically isometric to
$C^m$ (cf. Theorem 7.9 in [KN]).
\end{proof}

Next we present the following rigidity result for the case $R<0$. Since its
proof goes almost the same way as that for Theorem 3.2, we will describe the
argument briefly .

\begin{theorem}
 Let $(M,g,J)$ be a complete noncompact
K\"ahler-Einstein $m$-manifold ($m\geq 4$) with $R<0$ and $\Lambda (M,g)>0$.
Suppose $\int_{B_r}|B|^2dV_g=o(r^2)$ as $r\rightarrow \infty $. If
\begin{equation}
||B||_{L^m(M)}<\frac{\Lambda (M,g)(m+3)\sqrt{m^2(m^2-1)}}{3(m+1)(m^2-2)},
\tag{4.11}
\end{equation}
then $(M,g,J)$ is of constant holomorphic sectional curvature $\frac{2R}{%
m(m+1)}$.
\end{theorem}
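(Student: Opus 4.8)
The plan is to follow the proof of Theorem 3.2 almost verbatim, replacing $u=|E|$ by $u=|B|$ and the differential inequality (3.14) by (4.8); since $(M,g,J)$ is K\"ahler-Einstein here, $E$ vanishes identically and the cross term $|B||E|^2$ of \S 3 is absent, so the argument is in fact slightly shorter. First I would fix a nonnegative cut-off $\phi\in C_0^\infty(M)$, multiply (4.8) by $\phi^2$, integrate over $M$, and integrate $\int_M u(\triangle u)\phi^2\,dV_g$ by parts; combining with the Cauchy--Schwarz inequality carrying a free parameter $\varepsilon_1>0$ (as in (3.18)--(3.19)) yields
\[
(1+\tfrac{2}{m+1}-\varepsilon_1)\int_M|\nabla u|^2\phi^2\,dV_g\leq\int_M\Big\{\varepsilon_1^{-1}|\nabla\phi|^2u^2+\phi^2\Big[\tfrac{3(m^2-2)}{\sqrt{m^2(m^2-1)}}u^3-\tfrac{2R}{m}u^2\Big]\Big\}\,dV_g.
\]
Applying the Sobolev inequality (2.2) to $\phi u$, splitting its gradient term with a second parameter $\varepsilon_2>0$, and substituting the previous bound, I arrive at the analogue of (3.22),
\[
\Lambda(M,g)\Big(\int_M(\phi u)^{\frac{2m}{m-1}}dV_g\Big)^{\frac{m-1}{m}}\leq\int_M\big\{A_1|\nabla\phi|^2u^2+A_2R\phi^2u^2+A_3\phi^2u^3\big\}\,dV_g,
\]
where $A_2=\frac{m-1}{2m-1}-\frac{2(1+\varepsilon_2)}{m(1+\frac{2}{m+1}-\varepsilon_1)}$ and $A_3=\frac{3(1+\varepsilon_2)(m^2-2)}{(1+\frac{2}{m+1}-\varepsilon_1)\sqrt{m^2(m^2-1)}}$, while $A_1>0$ is explicit but irrelevant.

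The decisive point, and the source of the restriction $m\geq 4$, is the sign of the scalar-curvature term: since $R<0$ we want $A_2\geq 0$ so that $A_2R\phi^2u^2\leq 0$ may be discarded. As $\varepsilon_1,\varepsilon_2\to 0$ one has $A_2\to\frac{m-1}{2m-1}-\frac{2(m+1)}{m(m+3)}$, which is negative for $m=3$ but positive for $m\geq 4$; the worsening relative to Theorem 3.2 is caused by the refined Kato constant for $B$ in (4.7)--(4.8) being $\frac{2}{m+1}$ rather than $\frac1m$, together with the pointwise Ricci coefficient $\frac{2R}{m}$ in (4.8) rather than $\frac{2R}{m+1}$. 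Granting $m\geq 4$, I would choose $\varepsilon_1,\varepsilon_2$ small enough that simultaneously $A_2>0$ and the pinching hypothesis (4.11) still holds with the $\varepsilon$-perturbed constant, i.e. $A_3\|B\|_{L^m(M)}<\Lambda(M,g)$ strictly; this is possible by continuity, exactly as in passing from (3.17) to (3.23).

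Dropping the $R$-term, the inequality becomes $\Lambda(M,g)(\int_M(\phi u)^{\frac{2m}{m-1}}dV_g)^{\frac{m-1}{m}}\leq A_1\int_M|\nabla\phi|^2u^2\,dV_g+A_3\int_M\phi^2u^3\,dV_g$, and H\"older's inequality gives $\int_M\phi^2u^3\,dV_g\leq\|B\|_{L^m(M)}(\int_M(\phi u)^{\frac{2m}{m-1}}dV_g)^{\frac{m-1}{m}}$. Since $A_3\|B\|_{L^m(M)}<\Lambda(M,g)$, moving that term to the left leaves $c\,(\int_M(\phi u)^{\frac{2m}{m-1}}dV_g)^{\frac{m-1}{m}}\leq A_1\int_M|\nabla\phi|^2u^2\,dV_g$ with $c>0$. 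Finally I would insert the standard cut-offs $\phi_r$ ($\phi_r\equiv 1$ on $B_r$, $\phi_r\equiv 0$ off $B_{2r}$, $|\nabla\phi_r|\leq 2/r$), so the right-hand side is $\leq\frac{4A_1}{r^2}\int_{B_{2r}}|B|^2\,dV_g\to 0$ as $r\to\infty$ by the hypothesis $\int_{B_r}|B|^2\,dV_g=o(r^2)$. Hence $\int_M|B|^{\frac{2m}{m-1}}\,dV_g=0$, so $B\equiv 0$, and then (4.1) forces $R_{\alpha\overline{\beta}\gamma\overline{\delta}}=\frac{R}{m(m+1)}(g_{\alpha\overline{\beta}}g_{\gamma\overline{\delta}}+g_{\gamma\overline{\beta}}g_{\alpha\overline{\delta}})$, i.e. $(M,g,J)$ has constant holomorphic sectional curvature $\frac{2R}{m(m+1)}$. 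The only real obstacle is the bookkeeping of the previous paragraph: verifying that for $m\geq 4$ one can genuinely pick $\varepsilon_1,\varepsilon_2>0$ satisfying both constraints at once — this is exactly where the hypothesis $m\geq 4$ enters and the one point that requires care.
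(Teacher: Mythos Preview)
Your proposal is correct and follows essentially the same route as the paper's own proof: set $v=|B|$, combine the differential inequality (4.8) with integration by parts and the Sobolev inequality (2.2) to obtain the analogue of (3.22) with constants $B_1,B_2,B_3$ (your $A_1,A_2,A_3$), observe that $B_2>0$ for $m\geq 4$ and small $\varepsilon_1,\varepsilon_2$ so the $R$-term may be dropped, apply H\"older, absorb using (4.11), and finish with the cut-offs $\phi_r$ and the growth hypothesis on $\int_{B_r}|B|^2$. Your identification of the limiting value $A_2\to\frac{m-1}{2m-1}-\frac{2(m+1)}{m(m+3)}$ and the reason the threshold shifts from $m\geq 3$ to $m\geq 4$ is exactly the point; one cosmetic slip is the sign when you unpack (4.1) at the end (it reads $R_{\alpha\overline\beta\gamma\overline\delta}=-\frac{R}{m(m+1)}(\cdots)$), but the stated holomorphic sectional curvature $\frac{2R}{m(m+1)}$ is the correct conclusion.
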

\begin{proof}
Set $v=|B|$. For any test function $\phi \in C_0^\infty (M)$, it
follows from (4.8) that
\begin{equation}
\int_Mv(\triangle v)\phi ^2dV_g\geq \int_M\{\frac 2{m+1}|\nabla _{}v|^2\phi
^2+\frac{2R}mv^2\phi ^2-\frac{3(m^2-2)}{\sqrt{m^2(m^2-1)}}v^3\phi ^2\}dV_g.
\tag{4.12}
\end{equation}
As we derive (3.22) from (3.18), the same process allows us to get from
(4.12) the following inequality
\begin{equation*}
\left. \Lambda (M,g)\left( \int_M(\phi v)^{\frac{2m}{m-1}}dV_g\right) ^{%
\frac{m-1}m}\leq \int_M\{B_1|\nabla _{}\phi |^2v^2+B_2R\phi ^2v^2+B_3\phi
^2v^3\}dV_g,\right.
\end{equation*}
where
\begin{align*}
B_1=& \frac{1+\varepsilon _2}{\varepsilon _1[1+2(m+1)^{-1}-\varepsilon _1]}%
+1+\varepsilon _2^{-1}, \\
B_2=& \frac{m-1}{2m-1}-\frac{2(1+\varepsilon _2)}{m[1+2(m+1)^{-1}-\varepsilon
_1]}, \\
B_3=& \frac{3(1+\varepsilon _2)(m^2-2)}{[1+2(m+1)^{-1}-\varepsilon _1]\sqrt{%
m^2(m^2-1)}}.
\end{align*}
Note that $B_2>0$ for $m\geq 4$ and sufficiently small $\varepsilon _1$ and $%
\varepsilon _2$. Since $R<0$, we use the H\"older inequality to find
\begin{align*}
\Lambda (M,g)\left( \int_M(\phi v)^{\frac{2m}{m-1}}dV_g\right) ^{\frac{m-1}%
m}\leq& \int_M\{B_1|\nabla _{}\phi |^2v^2+B_3\phi ^2v^3\}dV_g \\
\leq& B_1\int_M|\nabla _{}\phi |^2v^2dV_g+B_3\left( \int_Mv^mdV_g\right)
^{\frac 1m}\left( \int_M(\phi v)^{\frac{2m}{m-1}}dV_g\right) ^{\frac{m-1}m}.
\end{align*}
Consequently
\begin{equation*}
\{\Lambda (M,g)-B_3\left( \int_Mv^mdV_g\right) ^{\frac 1m}\}\left(
\int_M(\phi v)^{\frac{2m}{m-1}}dV_g\right) ^{\frac{m-1}m}\leq
B_1\int_M|\nabla _{}\phi |^2v^2dV_g.
\end{equation*}
Under the assumption (4.11), we may choose sufficiently small $\varepsilon
_1 $ and $\varepsilon _2$ such that
\begin{equation*}
\Lambda (M,g)-B_3\left( \int_Mv^mdV_g\right) ^{\frac 1m}>0.
\end{equation*}
The remaining discussion is similar to that for Theorem 3.2.
\end{proof}

Now let us look at the case that $R>0$. By the solution of Yamabe problem,
we know that the Yamabe constant $\Lambda (M,g)$ is attained by a positive
function $u\in C^\infty (M)$. The metric $\widetilde{g}=u^{\frac 4{n-2}}g$ ($%
n=2m$), called the Yamabe metric, has constant scalar curvature given by
(cf. [He1], [LP]):
\begin{equation}
S_{\widetilde{g}}=\frac{2(2m-1)}{m-1}\Lambda (M,g)Vol(\widetilde{g})^{-\frac
1m}.  \tag{4.13}
\end{equation}
It is known that any Einstein metric on a compact Riemannian $n$-manifold
must be the Yamabe metric, provided it is not conformal to the standard
metric of $n$-sphere ([Ob]). Since $(M,g,J)$ is K\"ahler-Einstein, $g$ is
the Yamabe metric in its conformal class $[g]$. Hence (4.13) implies
\begin{equation}
\Lambda (M,g)=\frac{(m-1)R}{2m-1}Vol(M)^{\frac 1m}.  \tag{4.14}
\end{equation}
As in \S 3, we give two types of rigidity results for this case. The first
one is the following $L^m$-pinching result:

\begin{theorem}
 Let $(M,g,J)$ be a compact K\"ahler-Einstein $m$%
-manifold with $m\geq 2$ and $R>0$. Set
\begin{equation*}
Q(m)=\left\{
\begin{array}{cc}
\frac{m(m+3)}{m+1}, & m=2,3 \\
\frac{2(2m-1)}{m-1}, & m\geq 4.
\end{array}
\right.
\end{equation*}
If
\begin{equation}
||B||_{L^m(M)}<\Lambda (M,g)Q(m)\frac{\sqrt{m^2-1}}{3(m^2-2)},  \tag{4.15}
\end{equation}
then $(M,g,J)$ is biholomorphically homothetic to the complex projective
space $CP^m$.
\end{theorem}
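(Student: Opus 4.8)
The plan is to run the same scheme as in the proof of Theorem 3.3, now starting from the Weitzenb\"ock inequality (4.8) for $|B|$, and to supplement it with the Yamabe–metric identity (4.14) in the dimensions where the Ricci term has the unfavorable sign. First I would integrate (4.8) over the closed manifold $M$. Since $\int_M |B|\triangle|B|\,dV_g=-\int_M|\nabla|B||^2\,dV_g$, this yields
\[
\frac{m+3}{m+1}\int_M|\nabla|B||^2\,dV_g+\frac{2R}{m}\int_M|B|^2\,dV_g\le \frac{3(m^2-2)}{\sqrt{m^2(m^2-1)}}\int_M|B|^3\,dV_g .
\]
By H\"older's inequality the cubic integral is at most $\|B\|_{L^m(M)}\bigl(\int_M|B|^{2m/(m-1)}dV_g\bigr)^{(m-1)/m}$, while the Sobolev inequality (2.2) applied to $u=|B|$ gives
\[
\Lambda(M,g)\Bigl(\int_M|B|^{2m/(m-1)}dV_g\Bigr)^{(m-1)/m}\le \int_M|\nabla|B||^2\,dV_g+\frac{m-1}{2m-1}R\int_M|B|^2\,dV_g .
\]

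Writing $c=\bigl(\int_M|B|^{2m/(m-1)}dV_g\bigr)^{(m-1)/m}$ and $b=\int_M|B|^2\,dV_g$, I would use the last inequality to replace $\int_M|\nabla|B||^2$ in the integrated Weitzenb\"ock inequality by its lower bound $\Lambda(M,g)c-\frac{m-1}{2m-1}Rb$; after rearranging this produces
\[
\Bigl(\tfrac{m+3}{m+1}\Lambda(M,g)-\tfrac{3(m^2-2)}{\sqrt{m^2(m^2-1)}}\|B\|_{L^m(M)}\Bigr)c\le \Bigl(\tfrac{(m+3)(m-1)}{(m+1)(2m-1)}-\tfrac2m\Bigr)Rb ,
\]
and the right–hand coefficient has numerator $m^3-2m^2-5m+2$. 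For $m=2,3$ this numerator is negative, so the right–hand side is $\le 0$; since $c\ge 0$, hypothesis (4.15) with $Q(m)=m(m+3)/(m+1)$ forces the bracket on the left to be positive, whence $c=0$ and $B\equiv 0$.

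For $m\ge 4$ the numerator is positive. Here I would bound $b\le c\cdot\mathrm{Vol}(M)^{1/m}$ by H\"older, and use (4.14) in the form $R\,\mathrm{Vol}(M)^{1/m}=\frac{2m-1}{m-1}\Lambda(M,g)$, so that the right–hand side of the displayed inequality is at most $\bigl(\frac{m+3}{m+1}-\frac{2(2m-1)}{m(m-1)}\bigr)\Lambda(M,g)c$; cancelling the common term, this reduces to
\[
\Bigl(\tfrac{2(2m-1)}{m(m-1)}\Lambda(M,g)-\tfrac{3(m^2-2)}{\sqrt{m^2(m^2-1)}}\|B\|_{L^m(M)}\Bigr)c\le 0 ,
\]
and (4.15) with $Q(m)=2(2m-1)/(m-1)$ again yields $c=0$, i.e. $B\equiv 0$. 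In either case, substituting $B=0$ into (4.1) gives $R_{\alpha\overline{\beta}\gamma\overline{\delta}}=-\frac{R}{m(m+1)}(g_{\alpha\overline{\beta}}g_{\gamma\overline{\delta}}+g_{\gamma\overline{\beta}}g_{\alpha\overline{\delta}})$, so $(M,g,J)$ has constant holomorphic sectional curvature $\frac{2R}{m(m+1)}>0$; being compact and K\"ahler–Einstein with $R>0$ it has positive Ricci curvature, hence is simply connected (it is Fano), and a simply connected complete K\"ahler manifold of positive constant holomorphic sectional curvature is biholomorphically isometric, after rescaling, to $CP^m$ with the Fubini–Study metric. Thus $(M,g,J)$ is biholomorphically homothetic to $CP^m$.

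I expect the genuinely delicate point to be the case $m\ge 4$: there the Ricci term in (4.8) enters with the opposite sign to the one it carries in the Sobolev inequality (2.2), so the straightforward pinching argument breaks down, and one must exploit that a K\"ahler–Einstein metric is its own Yamabe metric — identity (4.14) — to trade the volume–weighted $L^2$ norm of $B$ for the Sobolev quantity $c$. The only other care needed is bookkeeping, namely checking that $\frac{(m+3)(m-1)}{(m+1)(2m-1)}-\frac2m$ switches sign precisely between $m=3$ and $m=4$, which is exactly what makes the two–piece definition of $Q(m)$ the natural one, and that the resulting pinching thresholds coincide with the right–hand side of (4.15).
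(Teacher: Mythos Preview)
Your argument is correct and reaches the same pinching constant $Q(m)$, but it takes a different route from the paper's. After integrating (4.8) and invoking H\"older, the paper uses the Sobolev inequality (2.2) in the \emph{opposite} direction from yours: it bounds $c=\bigl(\int_M|B|^{2m/(m-1)}\bigr)^{(m-1)/m}$ from above by $\Lambda(M,g)^{-1}\bigl(\int_M|\nabla|B||^2+\tfrac{(m-1)R}{2m-1}\int_M|B|^2\bigr)$ and substitutes this into the cubic term on the right of (4.16). This produces directly an inequality of the form
\[
\Bigl(1-\alpha\|B\|_{L^m}\Bigr)\int_M|\nabla|B||^2\,dV_g+\Bigl(\beta-\gamma\|B\|_{L^m}\Bigr)R\int_M|B|^2\,dV_g\le 0,
\]
with explicit positive constants $\alpha,\beta,\gamma$ (this is (4.18)). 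Hypothesis (4.15) forces both brackets to be nonnegative, so $B\equiv 0$. The two thresholds $\alpha^{-1}$ and $\beta/\gamma$ give exactly the two expressions $\tfrac{m(m+3)}{m+1}$ and $\tfrac{2(2m-1)}{m-1}$, whose minimum switches at $m=4$.

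The upshot is that the paper never needs the Yamabe-metric identity (4.14); although (4.13)--(4.14) are recorded before the theorem, they play no role in the proof. Your approach trades that simplicity for a more transparent explanation of why $Q(m)$ is piecewise: the sign of $\tfrac{(m+3)(m-1)}{(m+1)(2m-1)}-\tfrac2m$ is exactly what decides whether the Ricci term helps or hurts after your substitution, and (4.14) is the device you use to neutralize it when it hurts. Both routes are valid; the paper's is shorter and avoids Obata's theorem, while yours makes the dimensional dichotomy more explicit.
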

\begin{proof}
By integrating (4.8) and using the H\"older inequality, we have
\begin{align*}
\frac{m+3}{m+1}\int_M|\nabla _{}|B||^2dV_g
\leq& \frac{3(m^2-2)}{\sqrt{m^2(m^2-1)}}||B||_{L^m(M)}\left( \int_M|B|^{\frac{%
2m}{m-1}}dV_g\right) ^{\frac{m-1}m}\\
&-\frac{2R}m\int_M|B|^2dV_g.  \tag{4.16}
\end{align*}
Applying (2.2) to $|B|$ leads to
\begin{equation}
\Lambda (M,g)\left( \int_M|B|^{\frac{2m}{m-1}}dV_g\right) ^{\frac{m-1}m}\leq
\int_M|\nabla _{}|B||^2dV_g+\frac{(m-1)R}{2m-1}\int_M|B|^2dV_g.  \tag{4.17}
\end{equation}
It follows from (4.16) and (4.17) that
\begin{align*}
\int_M|\nabla _{}|B||^2dV_g\leq& \frac{3(m+1)(m^2-2)||B||_{L^m(M)}}{\Lambda
(M,g)(m+3)\sqrt{m^2(m^2-1)}}\int_M\{|\nabla _{}|B||^2+\frac{(m-1)R}{2m-1}%
|B|^2\}dV_g \\
&-\frac{2(m+1)R}{m(m+3)}\int_M|B|^2dV_g.
\end{align*}
Consequently
\begin{align*}
\{1-\frac{3(m+1)(m^2-2)}{\Lambda (M,g)m(m+3)\sqrt{m^2-1}}||B||_{L^m(M)}\}%
\int_M|\nabla |B||^2dV_g &\\
+\{\frac{2(m+1)}{m(m+3)}-\frac{3(m^2-2)\sqrt{m^2-1}}{\Lambda
(M,g)m(m+3)(2m-1)}||B||_{L^m(M)}\}R\int_M|B|^2dV_g & \leq 0.  \tag{4.18}
\end{align*}
It is easy to verify that (4.15) implies that the two terms on the left hand
side of (4.18) are nonnegative. This leads to $B=0$, that is, $(M,g,J)$ has
constant holomorphic sectional curvature $\frac{2R}{m(m+1)}>0$. Then Synge's
theorem ensures that $M$ is simply connected. Hence $(M,g,J)$ is
biholomorphically homothetic to the complex projective space $CP^m$ with the
Fubini-Study metric (cf. Theorem 7.9 in [KN], Vol.II).
\end{proof}

\begin{remark} In [IK], Itho and Kobayashi gave a similar $L^m$-pinching
result to characterize $CP^m$. However, their pinching constant is an
abstract number depending on $n$ and $R$. Our pinching constant seems better and more explicit than theirs.
\end{remark}
The next one is the following pointwise pinching result.
\begin{theorem}
 Let $(M,g,J)$ be a compact K\"ahler-Einstein $m$%
-manifold with $m\geq 2$ and $R>0$. If
\begin{equation}
|B|<\frac{2\sqrt{m^2-1}R}{3(m^2-2)},  \tag{4.19}
\end{equation}
then $(M,g,J)$ is biholomorphically homothetic to the complex projective
space $CP^m$.
\end{theorem}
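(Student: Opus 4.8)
The plan is to follow the argument for Theorem 3.4, now using the Weitzenb\"ock estimate (4.6) for $|B|^2$ together with the compactness of $M$. First I would note that $\sqrt{m^2(m^2-1)}=m\sqrt{m^2-1}$ and discard the nonnegative term $|\nabla_{}B|^2$ in (4.6), obtaining
\[
\frac12\triangle|B|^2\geq\left(\frac{2R}{m}-\frac{3(m^2-2)}{m\sqrt{m^2-1}}|B|\right)|B|^2 .
\]
The pointwise hypothesis (4.19) is precisely the inequality $\frac{3(m^2-2)}{m\sqrt{m^2-1}}|B|<\frac{2R}{m}$, holding at every point of $M$; hence the coefficient $\frac{2R}{m}-\frac{3(m^2-2)}{m\sqrt{m^2-1}}|B|$ is strictly positive on $M$, and since $M$ is compact and $|B|$ continuous it is bounded below by a positive constant.

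Next I would integrate the displayed inequality over the closed manifold $M$. The left-hand side integrates to zero by the divergence theorem, so
\[
0\geq\int_M\left(\frac{2R}{m}-\frac{3(m^2-2)}{m\sqrt{m^2-1}}|B|\right)|B|^2\,dV_g\geq 0 .
\]
Therefore the integrand vanishes identically, and the uniform positivity of the coefficient forces $|B|\equiv0$, that is, $B=0$.

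Finally, with $B=0$, formula (4.1) gives $R_{\alpha\overline{\beta}\gamma\overline{\delta}}=-\frac{R}{m(m+1)}(g_{\alpha\overline{\beta}}g_{\gamma\overline{\delta}}+g_{\gamma\overline{\beta}}g_{\alpha\overline{\delta}})$, so $(M,g,J)$ has constant holomorphic sectional curvature $\frac{2R}{m(m+1)}>0$. From here the conclusion follows exactly as at the end of the proof of Theorem 4.4: Synge's theorem shows that $M$ is simply connected, and then Theorem 7.9 in [KN], Vol. II, identifies $(M,g,J)$ with $CP^m$ equipped with a multiple of the Fubini-Study metric, i.e. $(M,g,J)$ is biholomorphically homothetic to $CP^m$.

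There is no serious obstacle in this argument: the analytic work has already been carried out in deriving (4.6) (which in turn rests on Lemma 2.1 and Lemma 4.1), and the geometric conclusion is the standard classification of complex space forms. The only point requiring a moment's care is to verify that the \emph{strict} pointwise pinching (4.19) yields a coefficient that is \emph{uniformly} positive, so that integrating the Weitzenb\"ock inequality genuinely forces $B\equiv 0$ rather than merely that $|B|$ be constant; compactness of $M$ takes care of this at once.
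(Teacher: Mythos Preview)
Your argument is correct and matches the paper's proof essentially verbatim: drop $|\nabla B|^2$ in (4.6), integrate over the compact manifold, and conclude $B\equiv 0$ from the strict pointwise positivity of the coefficient, then invoke the classification of complex space forms. Two minor slips: your reference ``as at the end of the proof of Theorem 4.4'' should point to Theorem 4.3 (you are proving 4.4), and (4.6) itself relies only on Lemma 2.1 --- Lemma 4.1 enters later, in (4.7)--(4.8), which you do not need here.
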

\begin{proof}
 From (4.6), we have
\begin{align*}
\frac 12\triangle |B|^2 \geq& |\nabla _{}B|^2+\left( \frac{2R}m-\frac{3(m^2-2)%
}{m\sqrt{m^2-1}}|B|\right) |B|^2 \\
\geq& \left( \frac{2R}m-\frac{3(m^2-2)}{m\sqrt{m^2-1}}|B|\right) |B|^2.  \tag{4.20}
\end{align*}
Under the assumption (4.19), the integration of (4.20) implies immediately
that $B=0$. Hence $(M,g,J)$ is biholomorphically homothetic to $CP^m$.
\end{proof}

\begin{remark}
 It is obvious that if the condition (4.19) is replaced by
\begin{equation}
|B|\leq \frac{2\sqrt{m^2-1}R}{3(m^2-2)},  \tag{4.21}
\end{equation}
then eihter $B=0$ or $|B|=\frac{2\sqrt{m^2-1}R}{3(m^2-2)}$ and $\nabla
_{}B=0 $. It would be interesting to investigate the case when the
equality of (4.21) holds.
\end{remark}
By combining Theorem 3.1 and Theorem 4.1, we get

\begin{theorem}
 Let $(M,g,J)$ be a complete noncompact K\"ahler $m$%
-manifold ($m\geq 2$) with zero scalar curvature and positive Yamabe
constant. If
\begin{equation*}
\sqrt{2}||E||_{L^m(M)}+||B||_{L^m(M)}<\frac{4\Lambda (M,g)(m^2+1)\sqrt{m^2-1}%
}{3m(m+1)(m^2-2)},
\end{equation*}
then $(M,g,J)$ has constant holomorphic sectional curvature $0$.
Furthermore, if $M$ is simply connected, then $(M,g,J)$ is biholomorphically
isometric to $C^m$.
\end{theorem}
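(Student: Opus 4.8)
The plan is to combine the two pinching results already established: Theorem 3.1 (which characterizes K\"ahler-Einstein manifolds among complete K\"ahler manifolds with constant scalar curvature, under a smallness condition on $\sqrt{2}\|E\|_{L^m}+\|B\|_{L^m}$) and Theorem 4.1 (which characterizes flat complex space forms among complete noncompact K\"ahler-Einstein manifolds with $R=0$ and $\Lambda(M,g)>0$, under a smallness condition on $\|B\|_{L^m}$). First I would observe that the hypothesis
\begin{equation*}
\sqrt{2}\|E\|_{L^m(M)}+\|B\|_{L^m(M)}<\frac{4\Lambda(M,g)(m^2+1)\sqrt{m^2-1}}{3m(m+1)(m^2-2)}
\end{equation*}
is strong enough to imply the hypothesis (3.15) of Theorem 3.1; this is the inequality
\begin{equation*}
\frac{4\Lambda(M,g)(m^2+1)\sqrt{m^2-1}}{3m(m+1)(m^2-2)}\le \frac{4\Lambda(M,g)(m^2-m+1)}{m^3}\sqrt{\frac{2(m+1)(m+2)}{2m^2+4m+3}},
\end{equation*}
which is a purely numerical comparison in $m$ that I would verify for all $m\ge 2$ (noting $m^2+1=m^2-m+1$ only when... actually one compares the two constants directly). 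Granting that, Theorem 3.1 applies and yields that $(M,g,J)$ is K\"ahler-Einstein, i.e. $E\equiv 0$.

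Next, with $E\equiv 0$ in hand, the quantity $\sqrt{2}\|E\|_{L^m(M)}$ vanishes, so the standing hypothesis collapses to precisely
\begin{equation*}
\|B\|_{L^m(M)}<\frac{4\Lambda(M,g)(m^2+1)\sqrt{m^2-1}}{3m(m+1)(m^2-2)},
\end{equation*}
which is exactly condition (4.9) of Theorem 4.1. Since $(M,g,J)$ is now a complete noncompact K\"ahler-Einstein $m$-manifold with $R=0$ and $\Lambda(M,g)>0$, Theorem 4.1 applies verbatim and gives that $B\equiv 0$; then (4.1) forces $R_{\alpha\overline{\beta}\gamma\overline{\delta}}=0$, so $(M,g,J)$ has constant holomorphic sectional curvature $0$. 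The simply connected case follows from Theorem 7.9 in [KN] exactly as in the proof of Theorem 4.1, giving the biholomorphic isometry with $C^m$.

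The only genuine content beyond invoking the two theorems is the numerical verification that the pinching constant in the present statement is $\le$ the pinching constant in (3.15); since here we take the \emph{smaller} of the two constants ($4.1$'s, which in fact dominates $3.1$'s — so one must check the direction carefully) the logical flow is: the hypothesis is at least as strong as each of (3.15) and (4.9). I expect the main (minor) obstacle is simply to check the elementary inequality comparing these two explicit algebraic expressions in $m$ for every integer $m\ge 2$; one can do this by squaring, clearing denominators, and reducing to a polynomial inequality, or by noting both sides behave like $c/m$ for large $m$ and checking small cases $m=2,3$ by hand. No new analysis, Weitzenb\"ock formula, or maximum-principle argument is needed — the theorem is a formal corollary of Theorems 3.1 and 4.1.
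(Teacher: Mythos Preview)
Your proposal is correct and matches the paper's own proof essentially line for line: the paper simply notes that the pinching constant here (which is exactly the constant in (4.9)) is strictly less than the constant in (3.15) for all $m\ge 2$, and then applies Theorems~3.1 and~4.1 in succession. Your only wobble is the parenthetical hedging about which constant dominates; the correct direction (as the paper states) is that the Theorem~4.1 constant is the smaller one, so the present hypothesis implies both (3.15) and (4.9), exactly as you ultimately say.
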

\begin{proof}
 It is easy to verify that
\begin{equation*}
\frac{4\Lambda (M,g)(m^2+1)\sqrt{m^2-1}}{3m(m+1)(m^2-2)}<\frac{4\Lambda
(M,g)(m^2-m+1)}{m^3}\sqrt{\frac{2(m+1)(m+2)}{2m^2+4m+3}}
\end{equation*}
for $m\geq 2$. Then, by using Theorems 3.1, 4.1 successively, we may prove
the assertions.
\end{proof}

Since
\begin{equation*}
\frac{(m+3)\sqrt{m^2(m^2-1)}}{3(m+1)(m^2-2)}<\frac{(m+1)}m\sqrt{\frac{%
2(m+1)(m+2)}{2m^2+4m+3}},
\end{equation*}
Theorems 3.2, 4.2 imply that

\begin{theorem}
 Let $(M,g,J)$ be a complete noncompact K\"ahler $m$%
-manifold ($m\geq 4$) with constant negative scalar curvature and positive Yamabe
constant. Suppose
\begin{equation*}
\int_{B_r}(|E|^2+|B|^2)dV_g=o(r^2)\quad \text{as }r\rightarrow \infty \text{.%
}
\end{equation*}
If
\begin{equation*}
\sqrt{2}||E||_{L^m(M)}+||B||_{L^m(M)}<\frac{\Lambda (M,g)(m+3)\sqrt{%
m^2(m^2-1)}}{3(m+1)(m^2-2)},
\end{equation*}
then $(M,g,J)$ has constant holomorphic sectional curvature $\frac{2R}{m(m+1)%
}$.
\end{theorem}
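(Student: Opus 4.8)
The plan is to obtain this as a two-step consequence of the results already proved in \S 3 and \S 4, using the constant comparison displayed immediately above the statement, namely
\begin{equation*}
\frac{(m+3)\sqrt{m^2(m^2-1)}}{3(m+1)(m^2-2)}<\frac{(m+1)}m\sqrt{\frac{2(m+1)(m+2)}{2m^2+4m+3}}\qquad (m\geq 4).
\end{equation*}
The right-hand side is precisely the pinching constant appearing in Theorem 3.2, and the left-hand side is the pinching constant appearing in Theorem 4.2 and in the hypothesis of the present theorem. So the hypothesis here is strictly \emph{stronger} than the pinching hypothesis of Theorem 3.2, which is what makes the chaining possible.

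First I would apply Theorem 3.2. Its hypotheses are met: $m\geq 4\geq 3$; the scalar curvature $R$ is constant and negative; $\Lambda(M,g)>0$; the decay condition $\int_{B_r}|E|^2\,dV_g=o(r^2)$ follows from the assumed $\int_{B_r}(|E|^2+|B|^2)\,dV_g=o(r^2)$; and the pinching condition (3.17) holds because, by the displayed inequality,
\begin{equation*}
\sqrt{2}\|E\|_{L^m(M)}+\|B\|_{L^m(M)}<\frac{\Lambda(M,g)(m+3)\sqrt{m^2(m^2-1)}}{3(m+1)(m^2-2)}<\frac{\Lambda(M,g)(m+1)}{m}\sqrt{\frac{2(m+1)(m+2)}{2m^2+4m+3}}.
\end{equation*}
Hence Theorem 3.2 gives that $(M,g,J)$ is K\"ahler-Einstein, i.e.\ $E\equiv 0$. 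I would note that this does not disturb the scalar curvature: $R$ remains the same constant negative number, so the Einstein manifold we now have still satisfies $R<0$.

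Next I would feed this into Theorem 4.2. We now have a complete noncompact K\"ahler-Einstein $m$-manifold with $m\geq 4$, $R<0$, $\Lambda(M,g)>0$, and (from the surviving half of the decay hypothesis) $\int_{B_r}|B|^2\,dV_g=o(r^2)$ as $r\to\infty$. Since $E\equiv 0$, the hypothesis of the present theorem reads $\|B\|_{L^m(M)}<\frac{\Lambda(M,g)(m+3)\sqrt{m^2(m^2-1)}}{3(m+1)(m^2-2)}$, which is exactly condition (4.11). Therefore Theorem 4.2 applies and yields that $(M,g,J)$ has constant holomorphic sectional curvature $\frac{2R}{m(m+1)}$, which is the desired conclusion.

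I do not expect a genuine obstacle here: the substance is already contained in Theorems 3.2 and 4.2, and the only real checks are (i) that the dimension range $m\geq 4$ simultaneously satisfies the $m\geq 3$ requirement of Theorem 3.2 and the $m\geq 4$ requirement of Theorem 4.2, and (ii) the elementary constant inequality above, which reduces after squaring and clearing denominators to a polynomial inequality in $m$ that is routine to verify for $m\geq 4$ (and is in any case already asserted in the text). The one point worth stating explicitly in the write-up is that vanishing of $E$ from the first step is exactly what turns the mixed pinching quantity $\sqrt{2}\|E\|_{L^m}+\|B\|_{L^m}$ into the pure $\|B\|_{L^m}$ needed to invoke Theorem 4.2.
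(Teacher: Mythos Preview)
Your proposal is correct and matches the paper's approach exactly: the paper simply states the constant comparison and says ``Theorems 3.2, 4.2 imply that'' the result holds, and you have filled in precisely those details. One small remark: you do not actually need $E\equiv 0$ to extract $\|B\|_{L^m}<\frac{\Lambda(M,g)(m+3)\sqrt{m^2(m^2-1)}}{3(m+1)(m^2-2)}$ from the hypothesis, since $\|B\|_{L^m}\leq \sqrt{2}\|E\|_{L^m}+\|B\|_{L^m}$ trivially; the vanishing of $E$ is needed only so that Theorem 4.2 (which assumes K\"ahler--Einstein) applies at all.
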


One may verify that the pinching constant in Theorem 4.3 is smaller than
that in Theorem 3.3. Likewise, we have

\begin{theorem}
Let $(M,g,J)$ be a compact K\"ahler $m$-manifold ($%
m\geq 2$) with constant positive scalar curvature. Let $Q(m)$ be as in
Theorem 4.3. If
\begin{equation*}
\sqrt{2}||E||_{L^m(M)}+||B||_{L^m(M)}<\frac{\Lambda (M,g)Q(m)\sqrt{m^2-1}}{%
3(m^2-2)},
\end{equation*}
then $(M,g,J)$ is biholomorphically homothetic to $CP^m$.
\end{theorem}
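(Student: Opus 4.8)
The plan is to derive this statement by applying, in succession, the two compact positive-scalar-curvature $L^m$-pinching results already at hand, exactly as Theorems 4.5 and 4.6 were obtained from their ingredients: first Theorem 3.3, which forces a compact K\"ahler manifold of constant positive scalar curvature to be K\"ahler--Einstein under an $L^m$ bound on $\sqrt 2\,|E|+|B|$, and then Theorem 4.3, which forces a compact K\"ahler--Einstein manifold with $R>0$ to be biholomorphically homothetic to $CP^m$ under condition (4.15) on $\|B\|_{L^m}$. The whole argument rests on the comparison of pinching constants already noted just above the statement, namely that the threshold in Theorem 4.3 is smaller than that in Theorem 3.3.

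First I would record (or re-derive) the inequality
$$Q(m)\,\frac{\sqrt{m^2-1}}{3(m^2-2)}< P(m)\,\sqrt{\frac{2(m+1)(m+2)}{2m^2+4m+3}},$$
with $P(m)$ as in Theorem 3.3 and $Q(m)$ as in Theorem 4.3. Because $P$ and $Q$ are given by different rational formulas for $m=2,3$ and for $m\ge 4$, this is checked case by case: in each range both sides are explicit and the inequality becomes a polynomial comparison, confirmed by evaluating at $m=2,3,4$ and noting that the ratio of the two sides stays bounded away from $1$ (it tends to $1/3$ as $m\to\infty$). Multiplying through by $\Lambda(M,g)$ and combining with the hypothesis
$$\sqrt 2\,\|E\|_{L^m(M)}+\|B\|_{L^m(M)}<\frac{\Lambda(M,g)\,Q(m)\sqrt{m^2-1}}{3(m^2-2)},$$
we see that condition (3.29) of Theorem 3.3 is satisfied, so $(M,g,J)$ is K\"ahler--Einstein, i.e.\ $E\equiv 0$.

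With $E\equiv 0$, the left-hand side of the hypothesis collapses to $\|B\|_{L^m(M)}$, so the hypothesis reads precisely
$$\|B\|_{L^m(M)}<\frac{\Lambda(M,g)\,Q(m)\sqrt{m^2-1}}{3(m^2-2)},$$
which is exactly condition (4.15). Since the metric is now K\"ahler--Einstein with the same positive constant scalar curvature $R$ and $\Lambda(M,g)>0$, Theorem 4.3 applies and yields that $(M,g,J)$ is biholomorphically homothetic to $CP^m$, completing the proof.

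The only genuine work is the constant comparison in the first step, and its one mildly delicate feature is that $Q(2)$ and $Q(3)$ are comparatively large, so these two exceptional values must be inspected individually rather than absorbed into a uniform estimate; however $P(2)=3/2$ and $P(3)=5/4$ dominate the corresponding quantities with ample room, so no obstruction arises and the reduction to Theorems 3.3 and 4.3 goes through.
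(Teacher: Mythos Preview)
Your proposal is correct and follows exactly the paper's approach: the paper simply remarks that the pinching constant in Theorem 4.3 is smaller than that in Theorem 3.3 and then states the result, so the proof is precisely the successive application of Theorems 3.3 and 4.3 that you describe. Your discussion of the constant comparison (including the asymptotic ratio $1/3$ and the separate treatment of $m=2,3$) merely spells out what the paper leaves to the reader.
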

Finally, Theorem 3.4 and Theorem 4.4 lead to

\begin{theorem} Let $(M,g,J)$ be a compact K\"ahler $m$-manifold ($%
m\geq 2$) with constant positive scalar curvature $R$. If
\begin{equation*}
\sqrt{2}|E|+|B|<\frac{2\sqrt{m^2-1}R}{3(m^2-2)},
\end{equation*}
then $(M,g,J)$ is biholomorphically homothetic to $CP^m$.
\end{theorem}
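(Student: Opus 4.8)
The plan is to combine Theorem 3.4 and Theorem 4.4, exactly as the previous two combination theorems (Theorems 4.5 and 4.6) were obtained. The key observation is that the hypothesis $\sqrt{2}|E|+|B|<\frac{2\sqrt{m^2-1}R}{3(m^2-2)}$ is \emph{stronger} than both of the pointwise hypotheses appearing in Theorem 3.4 and Theorem 4.4, so both conclusions can be applied in sequence.

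First I would check that the present pointwise bound implies the hypothesis of Theorem 3.4. Theorem 3.4 requires $\sqrt{2}|E|+|B|\leq \frac{2}{m+1}\sqrt{\frac{2(m+1)(m+2)}{2m^2+4m+3}}R$, so it suffices to verify the elementary inequality
\begin{equation*}
\frac{2\sqrt{m^2-1}}{3(m^2-2)}\leq \frac{2}{m+1}\sqrt{\frac{2(m+1)(m+2)}{2m^2+4m+3}}
\end{equation*}
for $m\geq 2$; this is a routine algebraic comparison (square both sides and cross-multiply). Granting this, Theorem 3.4 applies and shows that $(M,g,J)$ is K\"ahler-Einstein. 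In particular $E=0$, so the hypothesis of the present theorem reduces to $|B|<\frac{2\sqrt{m^2-1}R}{3(m^2-2)}$, which is precisely the hypothesis (4.19) of Theorem 4.4.

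Now I would invoke Theorem 4.4 directly: since $(M,g,J)$ is now a compact K\"ahler-Einstein $m$-manifold with $R>0$ and $|B|<\frac{2\sqrt{m^2-1}R}{3(m^2-2)}$, Theorem 4.4 gives that $(M,g,J)$ is biholomorphically homothetic to $CP^m$ with the Fubini-Study metric. This completes the argument.

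The only real work is the scalar inequality needed in the first step, and there is essentially no obstacle there — it is the same kind of comparison used implicitly throughout \S 3 and \S 4 (indeed the factor $\sqrt{\frac{2m^2+4m+3}{2(m+1)(m+2)}}$ versus $\frac{m-2}{m+2}\sqrt{\frac{m}{m-1}}$ comparisons). One should just be slightly careful that the bound in Theorem 3.4 is stated with $\leq$ while the present hypothesis is a strict $<$, which is harmless; and that once $E=0$ the reduction to (4.19) is exact rather than merely an inequality, so no loss of constant occurs in passing to the second step.
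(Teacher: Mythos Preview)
Your proposal is correct and matches the paper's approach exactly: the paper states only that ``Theorem 3.4 and Theorem 4.4 lead to'' this result, and your argument --- verifying that the pinching constant $\frac{2\sqrt{m^2-1}}{3(m^2-2)}$ is smaller than the constant $\frac{2}{m+1}\sqrt{\frac{2(m+1)(m+2)}{2m^2+4m+3}}$ of Theorem 3.4, applying Theorem 3.4 to get $E=0$, and then applying Theorem 4.4 --- is precisely the intended combination. The elementary inequality you identify as the only real work is indeed routine for all $m\geq 2$.
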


\noindent Tian Chong\\
School of Mathematical Science, Fudan University,
Shanghai 200433, China.\\
E-mail: valery4619@sina.com\\

\noindent Yuxin Dong\\
 School of Mathematical Science, Fudan University, Shanghai, 200433,  China.\\
E-mail: yxdong@fudan.edu.cn\\

\noindent Hezi Lin\\
 School of Mathematics and Computer Science,
 Fujian Normal University, Fuzhou,  350108,
 China.\\
E-mail: lhz1@fjnu.edu.cn\\

\noindent Yibin Ren\\
School of Mathematical Science, Fudan University,
Shanghai, 200433, China.\\
E-mail: allenrybqqm@hotmail.com

\end{document}